\documentclass[11pt, reqno]{amsart}

\usepackage{amsmath,mathrsfs}
\usepackage{amsfonts}
\usepackage{amssymb}
\usepackage{graphicx}
\usepackage[square,numbers]{natbib}

\newtheorem{theorem}{Theorem}
\newtheorem{claim}[theorem]{Claim}

\newtheorem{lemma}[theorem]{Lemma}
\newtheorem{proposition}[theorem]{Proposition}

\theoremstyle{definition}
\newtheorem{definition}[theorem]{Definition}

\theoremstyle{remark}

\newtheorem{case}{Case}

\numberwithin{theorem}{section}
\numberwithin{equation}{section}

\def\XXint#1#2#3{{\setbox0=\hbox{$#1{#2#3}{\int}$}
     \vcenter{\hbox{$#2#3$}}\kern-.5\wd0}}

\newcommand{\dd}{\; \mathrm{d}}

\begin{document}
\title[Lusin approximation in step 2 Carnot groups]{Lusin approximation for horizontal curves in step 2 Carnot groups}
\author[Enrico Le Donne]{Enrico Le Donne}
\address[Enrico Le Donne]{University of Jyvaskyla, Finland}
\email{Enrico.E.LeDonne@jyu.fi}
\author[Gareth Speight]{Gareth Speight}
\address[Gareth Speight]{University of Cincinnati, USA}
\email{Gareth.Speight@uc.edu}
\date{February 8, 2016}

\renewcommand{\subjclassname} {\textup{2010} Mathematics Subject Classification}
\subjclass[]{28C15, 
49Q15, 
43A80. 
}

\keywords{Lusin approximation, Carnot Groups, $C^1$ horizontal curves}

\begin{abstract}
A Carnot group $\mathbb{G}$ admits Lusin approximation for horizontal curves if for any absolutely continuous horizontal curve $\gamma$ in $\mathbb{G}$ and $\varepsilon>0$, there is a $C^1$ horizontal curve $\Gamma$ such that $\Gamma=\gamma$ and $\Gamma'=\gamma'$ outside a set of measure at most $\varepsilon$. We verify this property for free Carnot groups of step 2 and show that it is preserved by images of Lie group homomorphisms preserving the horizontal layer. Consequently, all step 2 Carnot groups admit Lusin approximation for horizontal curves.
\end{abstract}
\maketitle

\section{Introduction}
Carnot groups (Definition \ref{Carnot}) are Lie groups whose Lie algebra admits a stratification. The stratification gives rise to dilations and implies that points can be connected by horizontal curves (Definition \ref{horizontalcurve}), which are absolutely continuous curves with tangents in a distinguished subbundle of the tangent bundle \cite{Cho39, Ras38}. The Carnot-Carath\'{e}odory distance is the distance associated to a left-invariant length structure. Moreover, on each Carnot group one naturally considers Haar measures, which are unique up to scaling. With so much structure, it has become highly interesting to study geometric measure theory, and other aspects of analysis or geometry, in Carnot groups  \cite{ABB, B94, BLU07, CDPT07, Gro96, LeD, Mon02, Vit14}.

The setting of Carnot groups has both similarities and differences compared to the Euclidean case. On the one hand, there is often enough structure to ask if a result in the Euclidean setting generalizes to Carnot groups. Sometimes this is the case - for example, one can prove a meaningful version of Rademacher's theorem for Lipschitz maps between Carnot groups (Pansu's Theorem \cite{Pan89, Mag01}). On the other hand, Carnot groups exhibit fractal behaviour, the reason being that on such metric spaces the only curves of finite length are the horizontal ones. Moreover, any Carnot group (except for Euclidean spaces themselves) contains no subset of positive measure that is bi-Lipschitz equivalent to a subset of a Euclidean space \cite{LeD, Sem96}.

Absolutely continuous curves are those for which displacements are given by integrating the derivative - this connection between position and velocity makes them ideal for geometry. It is well known that absolutely continuous curves in $\mathbb{R}^{n}$ admit a Lusin approximation by $C^{1}$ curves \cite{AT04, EG91}: if $\gamma\colon [a,b]\to \mathbb{R}^{n}$ is absolutely continuous and $\varepsilon>0$, then there exists a $C^1$ curve $\Gamma\colon [a,b]\to \mathbb{R}^{n}$ such that:
\[\mathcal{L}^1\{t\in [a,b]:\Gamma(t)\neq \gamma(t) \mbox{ or }\Gamma'(t)\neq \gamma'(t)\}<\varepsilon.\]

Since only horizontal curves are important in the geometry of Carnot groups, one should ask if any horizontal curve in a Carnot group can be approximated by a horizontal curve that is also $C^{1}$. 

\begin{definition}
We say that a Carnot group $\mathbb{G}$ \emph{admits Lusin approximation for horizontal curves} if the following is true. Suppose $\gamma\colon [0,1]\to \mathbb{G}$ is an absolutely continuous horizontal curve in $\mathbb{G}$ and $\varepsilon>0$. Then there exists a $C^1$ horizontal curve $\Gamma\colon [0,1]\to \mathbb{G}$ such that:
\[\mathcal{L}^{1} \{t\in [0,1] \colon \Gamma(t)\neq \gamma(t) \mbox{ or }\Gamma'(t)\neq \gamma'(t)\}<\varepsilon.\]
\end{definition}

In \cite{Spe14} the second-named author showed that this is the case for the Heisenberg group, the most frequently studied non-Euclidean Carnot group (which has step 2), but not true for the Engel group (which has step 3). Roughly, the step of a Carnot group (Definition \ref{Carnot}) is the number of commutators one needs to take before the horizontal directions generate the whole tangent space. Such a value coincides with the nilpotency step of the group. Thus in Carnot groups of higher step, more complicated horizontal curves are needed to connect points. Euclidean spaces correspond to step 1 Carnot groups, where all absolutely continuous curves are horizontal.

In this article we extend the positive result of \cite{Spe14} to all Carnot groups of step 2: any horizontal curve in a step-2 Carnot group can be approximated by a $C^{1}$ horizontal curve.

\begin{theorem}\label{lusinstep2}
All Carnot groups of step 2 admit Lusin approximation for horizontal curves.
\end{theorem}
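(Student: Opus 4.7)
The plan follows the two-step strategy announced in the abstract: first establish Lusin approximation in the free step-$2$ Carnot group $\mathbb{F}_{n,2}$ of each rank $n$, then transfer to a general step-$2$ group along a surjective Lie group homomorphism preserving the horizontal layer. For the transfer, every step-$2$ Carnot group $\mathbb{G}$ of horizontal rank $n$ is a quotient $\phi\colon \mathbb{F}_{n,2} \to \mathbb{G}$ by a central subgroup contained in the second layer, and $d\phi$ restricts to an isomorphism of horizontal layers. Given a horizontal curve $\gamma$ in $\mathbb{G}$, its $V_1^{\mathbb{G}}$-valued control lifts via $(d\phi|_{V_1})^{-1}$ and integrates to a horizontal curve $\tilde\gamma$ in $\mathbb{F}_{n,2}$ with $\phi\circ\tilde\gamma=\gamma$. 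If the free case supplies a $C^1$ horizontal approximation $\tilde\Gamma$ of $\tilde\gamma$, then $\Gamma := \phi \circ \tilde\Gamma$ is $C^1$ and horizontal, and the chain rule gives $\Gamma(t) = \gamma(t)$ and $\Gamma'(t) = \gamma'(t)$ on the (almost full) set of $t$ where $\tilde\Gamma$ and $\tilde\gamma$ agree together with their derivatives.

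For $\mathbb{F}_{n,2}$ itself, I would work in exponential coordinates $(x, z) \in \mathbb{R}^n \times \mathbb{R}^{\binom{n}{2}}$, in which horizontal curves are exactly those satisfying $\dot z_{ij} = \tfrac{1}{2}(x_i \dot x_j - x_j \dot x_i)$ for $i<j$; so a horizontal curve is determined by its horizontal projection $x$ and one vertical initial condition. The first move is the classical Euclidean Lusin theorem applied to $x \colon [0,1] \to \mathbb{R}^n$, producing a $C^1$ curve $X$ and an open set $U \subset [0,1]$ of measure less than $\varepsilon/2$ outside of which $X=x$ and $X'=x'$. Writing $U = \bigsqcup_i (a_i, b_i)$, the horizontal lift $(X, Z)$ of $X$ will in general disagree with $(x,z)$ on $[0,1]\setminus U$ because on each interval $(a_i, b_i)$ the signed-area integral $\tfrac12\int_{a_i}^{b_i}(x_i \dot x_j - x_j \dot x_i)\dd t$ differs from the corresponding integral of $X$, and this vertical error then persists to all later times.

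The heart of the argument, and the step I expect to be the main obstacle, is to absorb these vertical discrepancies while preserving the $C^1$ and horizontality properties. On each maximal interval $(a_i, b_i)$ I would replace $X$ by a $C^1$ curve $Y_i$ that matches the values and derivatives of $x$ at both endpoints $a_i$ and $b_i$, and that additionally realizes, component by component, the prescribed signed-area increments
\[
\tfrac12 \int_{a_i}^{b_i} \bigl(Y_{i,k}\, \dot Y_{i,\ell} - Y_{i,\ell}\, \dot Y_{i,k}\bigr) \dd t \;=\; z_{k\ell}(b_i) - z_{k\ell}(a_i), \qquad k<\ell,
\]
so that the horizontal lift of the modified projection hits $(x(b_i), z(b_i))$ at time $b_i$. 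This is the genuinely step-$2$ ingredient: since $\dim V_2 = \binom{n}{2}$, each antisymmetric component must be corrected independently, which I would achieve by appending small loops lying in the coordinate $2$-planes $e_k \wedge e_\ell$, scaled to deliver the required areas and attached with bump-function cut-offs that leave the boundary $1$-jets of $X$ unchanged. The main technical subtlety is that the intervals $(a_i,b_i)$ may accumulate arbitrarily at the boundary of $U$, so the $C^1$-sizes of the corrections must decay fast enough that the global modification is $C^1$, stays close to $X$, and is non-trivial only inside an open set whose measure exceeds that of $U$ by at most $\varepsilon/2$; combined with the transfer step of the first paragraph, this yields the theorem.
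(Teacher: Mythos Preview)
Your proposal is correct and follows essentially the same route as the paper: reduce to the free step-$2$ group via a surjective homomorphism preserving the horizontal layer (Theorems~\ref{imagespreserve} and~\ref{goodmapsexist}), then on each gap interval build a $C^1$ horizontal interpolant matching endpoint $1$-jets and the prescribed signed-area increments by inserting planar loops (Lemma~\ref{interpolatingcurve}, Claim~\ref{interpolationsimplified}). The paper makes explicit the two points you leave implicit---uniform Lebesgue-point control on the good compact set, yielding the crucial estimate that the vertical defect on a gap of length $h$ is $O(\eta h^{2})$ with $\eta\to 0$ (Lemma~\ref{curvecontrol}), and a rotation at each gap aligning $\gamma'(a)$ with $X_{1}$ so the loop construction decouples into clean cases---but these are refinements of execution rather than a different approach.
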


To achieve the above result we first prove the desired claim in free Carnot groups of step 2 (Definition~\ref{freegroupstep2} and Theorem~\ref{freelusin}). To deduce Theorem~\ref{lusinstep2} from Theorem~\ref{freelusin}, we first show that if a Carnot group supports the Lusin approximation for horizontal curves, then so does any image of that space under a Lie group homomorphism that preserves the horizontal layer (Theorem~\ref{imagespreserve}). We then observe that any Carnot group is a suitable image of a free Carnot group of the same step (Theorem \ref{goodmapsexist}). 

For each integer $r\geq 2$ (corresponding to the dimension of the horizontal bundle), there is a free Carnot group of rank $r$ and step 2. Informally this is the Lie group associated to the Lie algebra of rank $r$ and step 2 for which commutators of horizontal vectors satisfy the least number of relations allowable. However, each free Carnot group of rank $r$ and step 2 admits a concrete description in coordinates (Definition \ref{freegroupstep2}). This description is more complicated than that of the Heisenberg group but has some aspects in common. Most importantly: horizontal curves are given by lifting curves in a Euclidean space, and the different vertical coordinates of such a lift (corresponding to non-horizontal directions) are given by signed areas swept out by the curve in various horizontal planes. 

The proof of Theorem \ref{freelusin}, namely Lusin approximation in free Carnot groups of step 2, forms the main part of the paper. We first restrict to a compact set $K$ of large measure such that $\gamma'|_{K}$ is uniformly continuous and each point of $K$ is a Lebesgue point of $\gamma'$. The complement of such a compact set is a union of intervals $(a,b)$. Given such an interval, we find an interpolating $C^{1}$ horizontal curve $\Gamma$ on $[a,b]$, with $\Gamma(t)=\gamma(t)$ and $\Gamma'(t)=\gamma'(t)$ for $t=a$ and $t=b$, such that $\Gamma'(t)$ is close to $\gamma'(a)$ and $\gamma'(b)$ for all $t\in (a,b)$. We do this by using our choice of $K$ to give information about the boundary conditions, then using the above interpretation of horizontal curves and constructing curves in Euclidean spaces satisfying boundary conditions and sweeping out given signed areas in different planes. 

The main difference between free Carnot groups and the Heisenberg group as treated in \cite{Spe14} is that in the Heisenberg group there is only one vertical (i.e. non-horizontal) coordinate, whereas for free Carnot groups there can be many. Hence the control on boundary conditions and construction of the interpolating horizontal curve is more complicated. Such a curve is constructed as a concatenation of simpler curves, each of which trace of a desired signed area in one plane but does not affect signed areas previously controlled in other planes.

Validity of a Lusin $C^{1}$ approximation is related to existence of a Whitney extension theorem. Working independently, Zimmermann \cite{Zim15} recently showed that a Whitney extension theorem holds for horizontal curve (fragments) in the Heisenberg group. He also observed that this can be used to give another proof of the Lusin approximation in the Heisenberg group as appeared in \cite{Spe14}. Whitney extension and Lusin approximation in Carnot groups have also been considered by other authors, but for real-valued functions defined on Carnot groups \cite{FSS01, VP06}. In the Euclidean case, Lusin approximation and Whitney extension can be used to show that rectifiable sets can be defined equivalently using both Lipschitz surfaces and $C^1$ surfaces. Rectifiability is an active area of research in metric spaces and particularly Carnot groups \cite{AK00, BRS03, FSS01, FSS03, FSS03b, GJ13, KS04, Mag04, MSS10, P04}.

\medskip

\noindent \textbf{Acknowledgement.} Part of this work was carried out when the second-named author was visiting the University of Jyvaskyla. He thanks the mathematics department at the University of Jyvaskyla for its kind hospitality. E.L.D. acknowledges the support of the Academy of Finland project no. 288501.   

\section{Preliminaries}
In this section we define Carnot groups, horizontal curves, free-nilpotent Lie algebras and free Carnot groups, and coordinate representations of free Carnot groups of step 2.

\subsection{Carnot groups}
Recall that a Lie group is a smooth manifold that is also a group for which the multiplication and inversion are smooth. A smooth vector field $X$ on a Lie group $\mathbb{G}$ is left-invariant if it satisfies the equality $(dL_{g})(X(h))=X(gh)$ for all $g, h\in \mathbb{G}$, where $dL_{g}$ denotes the differential of the left translation $h\mapsto gh$. The associated Lie algebra of a Lie group is the vector space of left-invariant vector fields equipped with the Lie bracket operation on vector fields. Using left translations, this can be identified with the tangent space at the identity. Viewing smooth vector fields as derivations on smooth functions, the Lie bracket $[X,Y]$ of smooth vector fields $X$ and $Y$ is a smooth vector field defined on smooth functions by $[X,Y](f)=X(Y(f))-Y(X(f))$.

\begin{definition}\label{Carnot}
A \emph{Carnot group} $\mathbb{G}$ of \emph{step} $s$ is a simply connected Lie group whose Lie algebra $\mathfrak{g}$ admits a decomposition as a direct sum of subspaces of the form
\[\mathfrak{g}=V_{1}\oplus V_{2}\oplus \ldots \oplus V_{s}\]
such that $V_{i}=[V_{1},V_{i-1}]$ for any $i=2, \ldots, s$, and $[V_{1},V_{s}]=0$. The subspace $V_{1}$ is called the \emph{horizontal layer} and its elements are called \emph{horizontal vector fields}. 
\end{definition}

Recall that a curve $\gamma\colon [a,b]\to \mathbb{R}^{n}$ is absolutely continuous if it is differentiable almost everywhere, $\gamma'\in L^{1}[a,b]$, and
\[\gamma(t_{2})=\gamma(t_{1})+\int_{t_{1}}^{t_{2}} \gamma'(t) \dd t\]
whenever $t_{1}, t_{2}\in [a,b]$. The family of absolutely continuous curves in $\mathbb{R}^{n}$ includes all Lipschitz curves. A curve in a smooth manifold is absolutely continuous if in any chart it is absolutely continuous.

\begin{definition}\label{horizontalcurve}
Fix a basis $X_{1}, \ldots, X_{r}$ of $V_{1}$. An absolutely continuous curve $\gamma\colon [a,b]\to \mathbb{G}$ is \emph{horizontal} if there exist $u_{1}, \ldots, u_{r}\in L^{1}[a,b]$ such that
\[\gamma'(t)=\sum_{j=1}^{r}u_{j}X_{j}(\gamma(t))\]
for almost every $t\in [a,b]$.
\end{definition}

We say that a vector $v\in \mathbb{R}^n$ is \emph{horizontal} at $p\in \mathbb{G}$ if $v=E(p)$ for some $E\in V_1$. Thus an absolutely continuous curve is horizontal if and only if $\gamma'(t)$ is horizontal at $\gamma(t)$ for almost every $t$. Left translations map absolutely continuous, $C^{1}$ or horizontal curves to curves of the same type. 

\subsection{Free Carnot groups of step 2}

We first give the more abstract but flexible definition of free Carnot group via free-nilpotent Lie algebras. We then give the representation in coordinates of free Carnot groups of step 2 which we will use for most of the article. 

We recall that a Lie algebra is a vector space $V$ equipped with a Lie bracket $[\cdot,\cdot] \colon V\times V\to V$ that is bilinear, satisfies $[x,x]=0$, and satisfies the Jacobi identity $[x,[y,z]]+[z,[x,y]]+[y,[z,x]]=0$. A homomorphism between Lie algebras is simply a linear map that preserves the Lie bracket. If it is also bijective than the map is an isomorphism. Free-nilpotent Lie algebras can be defined as follows (see Definition 14.1.1 in \cite{BLU07}).

\begin{definition}\label{freeliealgebra}
Let $r\geq 2$ and $s\geq 1$ be integers. We say that $\mathcal{F}_{r,s}$ is the \emph{free-nilpotent Lie algebra} with $r$ \emph{generators} $x_1, \ldots, x_{r}$ of \emph{step} $s$ if:
\begin{enumerate}
\item $\mathcal{F}_{r,s}$ is a Lie algebra generated by elements $x_1, \ldots, x_r$ (i.e., $\mathcal{F}_{r,s}$ is the smallest Lie algebra containing $x_1, \ldots, x_r$),
\item $\mathcal{F}_{r,s}$ is nilpotent of step $s$ (i.e., nested Lie brackets of length $s+1$ are always $0$),
\item for every Lie algebra $\mathfrak{g}$ that is nilpotent of step $s$ and for every map $\Phi\colon \{x_1, \ldots, x_r\}\to \mathfrak{g}$, there exists a homomorphism of Lie algebras $\tilde{\Phi}\colon \mathcal{F}_{r,s} \to \mathfrak{g}$ that extends $\Phi$, and moreover it is unique.
\end{enumerate}
\end{definition}

We next define free Carnot groups (see Definition 14.1.3 in \cite{BLU07}). A homomorphism between Carnot groups is simply a smooth map that preserves the group operation and the horizontal vectors. If the map is also a diffeomorphism then it is an isomorphism. Since Carnot groups are simply connected Lie groups, any homomorphism $\phi$ between their Lie algebras lifts to a homomorphism $F$ between the Carnot groups satisfying $dF=\phi$. Note that if two Carnot groups have isomorphic Lie algebras, then the Carnot groups themselves are isomorphic.

\begin{definition}\label{freecarnotgroup}
A \emph{free Carnot group} is a Carnot group whose Lie algebra is isomorphic to a free-nilpotent Lie algebra $\mathcal{F}_{r, s}$ for some $r\geq 2$ and $s\geq 1$. In this case the horizontal layer of the free Carnot group is the span of the generators of $\mathcal{F}_{r,s}$.
\end{definition}

We now give an explicit construction of free Carnot groups of step 2. Fix an integer $r\geq 2$ and denote $n=r+r(r-1)/2$. In $\mathbb{R}^{n}$ denote the coordinates by $x_{i}$, $1\leq i\leq r$, and $x_{ij}$, $1\leq j<i\leq r$. Let $\partial_{i}$ and $\partial_{ij}$ denote the standard basis vectors in this coordinate system. Define $n$ vector fields on $\mathbb{R}^n$ by:
\[ X_{k}:=\partial_{k}+\sum_{j>k} \frac{x_{j}}{2}\partial_{jk}-\sum_{j<k}\frac{x_{j}}{2}\partial_{kj} \qquad \mbox{if }1\leq k \leq r,\]
\[X_{kj}:=\partial_{kj} \qquad \mbox{if } 1\leq j<k\leq r.\]

\begin{definition}\label{freegroupstep2}
Define the \emph{free Carnot group of step 2 and $r$ generators} by $\mathbb{G}_r:=(\mathbb{R}^{n}, \, \cdot)$, where the product $x\cdot y \in \mathbb{G}_r$ of $x,y\in \mathbb{G}_r$ is given by:
\[(x\cdot y)_{k} = x_{k}+y_{k} \qquad \mbox{if }1\leq k\leq r,\]
\[(x\cdot y)_{ij} = x_{ij}+y_{ij}+\frac{1}{2}(x_{i}y_{j}-y_{i}x_{j}) \qquad \mbox{if }1\leq j<i\leq r.\]
The Carnot structure of $\mathbb{G}_r$ is given by
\[V_{1}=\mathrm{Span} \{X_{k}\colon 1\leq k \leq r\} \mbox{ and }V_{2}=\mathrm{Span} \{X_{kj}\colon 1\leq j<k\leq r\}.\]
\end{definition}

Note that free Carnot groups of step 2 are exactly those that are isomorphic to a Carnot group $\mathbb{G}_r$ for some $r$ and $X_{1}, \ldots, X_{r}$ are the generators.

It is easily verified that for $1\leq j<k \leq r$ and $1\leq i\leq r$:
\[ [X_{k}, X_{j}]=X_{kj} \mbox{ and } [X_{i}, X_{kj}]=0.\]

It will be important that horizontal curves in $\mathbb{G}_r$ are obtained by lifting curves in $\mathbb{R}^r$. The following proposition follows directly from the definitions of $X_{1}, \ldots, X_{r}$, so the proof is omitted.

\begin{proposition}\label{horizontalequation}
A vector $v\in \mathbb{R}^{n}$ is horizontal at $p\in \mathbb{G}_r$ if and only if for every $1\leq j<i\leq r$:
\[v_{ij}=\frac{1}{2}(p_{i}v_{j}-p_{j}v_{i}).\]
An absolutely continuous curve $\gamma \colon [a,b]\to \mathbb{G}_r$ is horizontal if and only if for every $1\leq j<i\leq r$:
\[\gamma_{ij}(t)=\gamma_{ij}(a) + \frac{1}{2}\int_{a}^{t} (\gamma_{i}\gamma_{j}'-\gamma_{j}\gamma_{i}')\]
for every $t\in [a,b]$.
\end{proposition}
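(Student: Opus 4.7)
The plan is to read off the coordinate expressions of the basis vector fields $X_1,\ldots,X_r$ at a point $p$, and then invoke the definition of horizontality, which states that $v \in \mathbb{R}^n$ is horizontal at $p$ if and only if $v = \sum_{k=1}^{r} u_k X_k(p)$ for some scalars $u_1,\ldots,u_r$.

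First I would fix $1 \le j < i \le r$ and compute the coefficient of $\partial_{ij}$ inside each $X_k$ evaluated at $p$. Inspecting the formula
\[
X_k = \partial_k + \sum_{\ell>k}\frac{x_\ell}{2}\partial_{\ell k} - \sum_{\ell<k}\frac{x_\ell}{2}\partial_{k\ell},
\]
the term $\partial_{ij}$ appears in the middle sum precisely when $k=j$ and $\ell=i$, contributing $p_i/2$, and in the last sum precisely when $k=i$ and $\ell=j$, contributing $-p_j/2$. Since $X_k(p)$ has $k$-th horizontal coordinate equal to $1$ and all other horizontal coordinates equal to $0$, the relation $v=\sum_k u_k X_k(p)$ forces $u_k = v_k$ for every $k$, and then reading off the $(i,j)$-coordinate gives
\[
v_{ij} = \tfrac{1}{2}(p_i\, u_j - p_j\, u_i) = \tfrac{1}{2}(p_i v_j - p_j v_i).
\]
Conversely, given $v$ satisfying these identities for every pair $i>j$, setting $u_k := v_k$ and running the same computation backwards shows that $v = \sum_k u_k X_k(p)$, so $v$ is horizontal at $p$.

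For the curve statement, I would apply the pointwise characterization to the tangent vector $\gamma'(t)$ at the point $\gamma(t)$. Horizontality of $\gamma$ is equivalent to $\gamma'(t)$ being horizontal at $\gamma(t)$ for almost every $t$, which by the first part translates into
\[
\gamma_{ij}'(t) = \tfrac{1}{2}\bigl(\gamma_i(t)\gamma_j'(t) - \gamma_j(t)\gamma_i'(t)\bigr)
\]
for almost every $t$ and every $1\le j<i\le r$. Since $\gamma$ is absolutely continuous, so are its coordinates, and integrating from $a$ to $t$ yields the displayed integral formula. Conversely, if the integral identity holds for every $t\in[a,b]$, then the map $t\mapsto \gamma_{ij}(t)$ is absolutely continuous with almost-everywhere derivative $\tfrac{1}{2}(\gamma_i\gamma_j'-\gamma_j\gamma_i')$, which is exactly the pointwise horizontality condition for $\gamma'(t)$ at $\gamma(t)$ almost everywhere.

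There is no real obstacle here beyond careful index bookkeeping; the main thing to be disciplined about is matching the two sums in the definition of $X_k$ to the correct sign of the $\partial_{ij}$ term, and remembering that our convention only uses coordinates $x_{ij}$ with $i>j$ so that each pair $(i,j)$ appears exactly once in each of the two sums above.
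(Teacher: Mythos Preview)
Your proof is correct and matches the paper's approach: the paper omits the proof entirely, stating only that it ``follows directly from the definitions of $X_{1}, \ldots, X_{r}$,'' and your argument is precisely the routine coordinate computation that justifies this.
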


If $(\gamma_{i}(a),\gamma_{j}(a))=(0,0)$ then $\frac{1}{2}\int_{a}^{t} (\gamma_{i}\gamma_{j}'-\gamma_{j}\gamma_{i}')$ can be interpreted as the signed area of the planar region enclosed by the curve $(\gamma_{i},\gamma_{j})|_{[a,t]}$ and the straight line segment joining $(0,0)$ to $(\gamma_{i}(t),\gamma_{j}(t))$.

\begin{definition}\label{horizontallift}
Suppose $\varphi\colon [a,b] \to \mathbb{R}^{r}$ is absolutely continuous, $p\in \mathbb{G}_r$ and $\varphi(a)=(p_{1},\ldots, p_{r})$. Define $\gamma\colon [a,b] \to \mathbb{G}_r$ by $\gamma_{i}=\varphi_{i}$ for $1\leq i \leq r$ and, for every $1\leq j<i\leq r$,
\[\gamma_{ij}(t)=p_{ij} + \frac{1}{2} \int_{a}^{t} (\varphi_{i}\varphi_{j}'-\varphi_{j}\varphi_{i}')\]
for every $t\in [a,b]$. We say that $\gamma$ is the \emph{horizontal lift} of $\varphi$ starting at $p$.
\end{definition}

Proposition \ref{horizontalequation} shows that the horizontal lift of an absolutely continuous curve in $\mathbb{R}^r$ is a horizontal curve in $\mathbb{G}_r$. 

Throughout this paper, $C$ will denote a bounded constant which may vary between expressions.

\section{Lusin approximation in free Carnot groups of step 2}\label{concreteargument}

The main result of this section is stated below. It will yield our main result, Theorem \ref{lusinstep2}, when combined with Theorem \ref{imagespreserve} and Theorem \ref{goodmapsexist}.

\begin{theorem}\label{freelusin}
Free Carnot groups of step 2 admit Lusin approximation for horizontal curves.
\end{theorem}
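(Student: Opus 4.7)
Fix an absolutely continuous horizontal $\gamma:[0,1]\to\mathbb{G}_r$ and $\varepsilon>0$. First I apply Lusin's theorem to $\gamma'\in L^1$ to produce a compact set $K\subset[0,1]$ with $\mathcal{L}^1([0,1]\setminus K)<\varepsilon$ on which $\gamma'$ is uniformly continuous; after adding $\{0,1\}$ to $K$ and discarding a null set I may also assume every point of $K$ is both a Lebesgue point of $\gamma'$ and a point of density one of $K$. Set $\Gamma:=\gamma$ on $K$ and write $[0,1]\setminus K=\bigsqcup_k(a_k,b_k)$ with all $a_k,b_k\in K$. The task reduces to constructing, on each $[a,b]:=[a_k,b_k]$, a $C^1$ horizontal interpolation $\Gamma$ matching $\gamma$ and $\gamma'$ at the endpoints with $\|\Gamma'(t)-\gamma'(a)\|$ and $\|\Gamma'(t)-\gamma'(b)\|$ uniformly small on $[a,b]$; combined with the uniform continuity of $\gamma'|_K$, this will force the globally defined $\Gamma$ to be $C^1$ at every density-one point of $K$.

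By Proposition~\ref{horizontalequation}, every horizontal $\Gamma$ on $[a,b]$ is the horizontal lift (starting at $\gamma(a)$) of its own projection $\varphi:=(\Gamma_1,\dots,\Gamma_r):[a,b]\to\mathbb{R}^r$. Writing $\pi$ for projection onto the first $r$ coordinates, the problem becomes: find a $C^1$ curve $\varphi$ with the prescribed Hermite data $(\varphi(a),\varphi'(a),\varphi(b),\varphi'(b))=(\pi(\gamma(a)),\pi(\gamma'(a)),\pi(\gamma(b)),\pi(\gamma'(b)))$ such that, for every $1\le j<i\le r$,
\[
\tfrac{1}{2}\int_a^b(\varphi_i\varphi_j'-\varphi_j\varphi_i')\dd t = \gamma_{ij}(b)-\gamma_{ij}(a).
\]
Thus one must interpolate in $\mathbb{R}^r$ subject to $\binom{r}{2}$ extra signed-area constraints.

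I would build $\varphi$ as a smooth base curve plus a concatenation of small localized perturbations. Take any $C^1$ interpolation $\varphi_{0}$ satisfying the four Hermite endpoint conditions (cubic Hermite, say) and select $\binom{r}{2}$ disjoint subintervals $I_{ij}\subset(a,b)$, one per pair $(i,j)$. On $I_{ij}$ add a bump $\psi^{(ij)}$ that vanishes together with its derivative at the endpoints of $I_{ij}$ and is supported in the $i$-th and $j$-th coordinates; its $(i,j)$-component traces a small planar loop with a prescribed signed area and with $\int_{I_{ij}}\psi^{(ij)}_i=\int_{I_{ij}}\psi^{(ij)}_j=0$. After an integration by parts the vanishing-moment condition makes the cross terms between $\psi^{(ij)}$ and $\varphi_{0}$ negligible, and no cross terms between distinct bumps arise because the supports are disjoint. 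Consequently each bump perturbs only the $(i,j)$-signed area, so the $\binom{r}{2}$ corrections decouple and the loop areas can be chosen independently to close all constraints simultaneously.

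The heart of the argument---and the main obstacle---is obtaining quantitative bounds that keep $\|\varphi'-\pi(\gamma'(a))\|$ uniformly small on $[a,b]$ as $b-a\to 0$. The Lebesgue-point hypothesis gives $\gamma(t)=\gamma(a)+(t-a)\gamma'(a)+o(b-a)$ and its symmetric counterpart at $b$, from which a direct computation shows the required signed-area correction
\[
\gamma_{ij}(b)-\gamma_{ij}(a)-\tfrac{1}{2}\bigl(\gamma_i(a)\gamma_j(b)-\gamma_j(a)\gamma_i(b)\bigr)
\]
to be $o((b-a)^2)$. A smooth planar loop with zero first moments and signed area $\delta$, laid over a time interval of length proportional to $b-a$, can then be built with peak speed of order $\sqrt{\delta}/(b-a)=o(1)$; the vertical components of $\Gamma'$ are controlled via the horizontality identity $v_{ij}=\tfrac{1}{2}(p_iv_j-p_jv_i)$ together with the boundedness of $\gamma$ on $[0,1]$. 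Uniform continuity of $\gamma'|_K$ then upgrades endpoint closeness to global $C^1$ regularity: for a density-one point $t_0\in K$ and $s\to t_0$, either $s\in K$ and continuity applies directly, or $s$ lies in some $(a_k,b_k)$ with both endpoints tending to $t_0$ through $K$, in which case $\Gamma'(s)=\gamma'(a_k)+o(1)\to\gamma'(t_0)$.
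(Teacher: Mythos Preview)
Your outline follows the same architecture as the paper (restrict to a good compact $K$, interpolate in each gap with an $\mathbb{R}^r$-curve whose lift matches the endpoints, bound the derivative deviation, then glue), and your estimate that the area defect after left translation is $o((b-a)^2)$ is correct and is exactly Lemma~\ref{curvecontrol}. The genuine gap is in your decoupling claim. After integration by parts the cross term contributed by the bump $\psi^{(ij)}$ to the $(p,q)$-area (for any $p,q$ with $\{p,q\}\cap\{i,j\}\neq\emptyset$) is $\int_{I_{ij}}(\varphi_{0,q}'\psi^{(ij)}_p-\varphi_{0,p}'\psi^{(ij)}_q)$. The zero-moment condition $\int\psi^{(ij)}_i=\int\psi^{(ij)}_j=0$ kills this only when $\varphi_0'$ is \emph{constant} on $I_{ij}$; for a cubic Hermite base, $\varphi_0'$ is a quadratic that varies by $O(\varepsilon_l)$, so the cross term has size $O(\varepsilon_l^{3/2}(b-a)^2)$---small but nonzero. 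Hence the corrections do \emph{not} decouple, the loop areas cannot be set independently, and the lift of your $\varphi$ does not land exactly at $\gamma(b)$. You need either a fixed-point/iteration argument to close the coupled system, or to strengthen the moment condition (since $\varphi_0'$ is quadratic, imposing $\int\psi_i=\int t\psi_i=\int t^2\psi_i=0$ would make all cross terms vanish exactly).

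The paper achieves exact decoupling differently: it composes with a group automorphism $F_a$ sending $\gamma'(a)$ to $(L,0,\dots,0)$, then splits $[a,b]$ into two halves. On the second half a curve $\beta$ absorbs the Hermite endpoint data; on the first half the base motion is \emph{exactly linear} in $x_1$, and the bumps are explicit trigonometric functions chosen so that the cross terms with a linear base are identically zero (e.g.\ $\int g_i=0$ by orthogonality in Case~3). The rotation also separates the area defects into two regimes---$|\tilde q_{i1}|\le C\varepsilon(L+\varepsilon)h^2$ versus $|\tilde q_{ij}|\le C\varepsilon^2 h^2$ for $j>1$---and forces a further case split $L\ge\varepsilon$ versus $L<\varepsilon$ for the $i1$-coordinates, which your uniform $\sqrt{\delta}/(b-a)$ scaling handles more crudely (yielding speed $O(\sqrt{\varepsilon_l})$ rather than $O(\varepsilon_l)$, still sufficient).

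Two minor points. Forcing $0,1\in K$ is unsafe since they need not be Lebesgue points of $\gamma'$; the paper instead works on $[\min K,\max K]$ and extends $\Gamma$ arbitrarily afterwards. And in your gluing paragraph, when $t_0=a_k$ the right endpoint $b_k$ does not tend to $t_0$; continuity of $\Gamma'$ from the right at such $t_0$ comes directly from $\psi_l'(a_l)=\gamma'(a_l)$, which the paper checks as a separate case.
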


Fix an integer $r\geq 2$, $n=r+r(r-1)/2$ and let $\mathbb{G}=\mathbb{G}_r$ be the free Carnot group of step 2 and rank $r$ (Definition \ref{freegroupstep2}). Fix an absolutely continuous horizontal curve $\gamma\colon [0,1]\to \mathbb{G}$ and $\varepsilon>0$.

Our proof of Theorem \ref{freelusin} has three parts. We first define a compact set $K$ of large measure on which $\gamma'$ is well behaved and, by composing with an isomorphism of $\mathbb{G}$, investigate the properties of $\gamma$ at nearby points in a good choice of coordinates. We then construct interpolating curves, which are smooth and horizontal, in the subintervals that form the complement of $K$. Finally we piece together our curves to obtain the desired smooth horizontal curve, which agrees with $\gamma$ on $K$.

\subsection{A good compact set}
Since $\gamma$ is horizontal (hence absolutely continuous), $\gamma'$ is integrable so almost every $t_{0}\in [0,1]$ is a Lebesgue point of $\gamma'$. Using also Lusin's theorem, we may choose a compact set $K\subset (0,1)$ and a function $\delta\colon (0,\infty) \to (0,\infty)$ such that $\mathcal{L}^{1}([0,1]\setminus K)<\varepsilon$ and:
\begin{equation}\label{l1} \gamma'(t_{0}) \mbox{ exists and is horizontal at }\gamma(t_0) \mbox{ for every }t_0 \in K,\end{equation}
\begin{equation}\label{l2} |\gamma'(t_{1})-\gamma'(t_{0})|\leq \eta \mbox{ if }t_{0}, t_{1} \in K \mbox{ and }|t_{1}-t_{0}|<\delta(\eta),\end{equation}
\begin{equation}\label{l3} \int_{t_0}^{t_{0}+r} |\gamma' - \gamma'(t_{0})|\leq \eta r \mbox{ if } t_0 \in K \mbox{ and } 0<r<\delta(\eta).\end{equation}

Let $m=\min K$ and $M=\max K$. To prove Theorem \ref{freelusin}, it suffices to find a $C^1$ horizontal curve $\Gamma\colon [m,M]\to \mathbb{G}$ with $\Gamma=\gamma$ and $\Gamma'=\gamma'$ on $K$. After this is done one can simply extend $\Gamma$ arbitrarily to a $C^1$ horizontal curve on $[0,1]$. 

Fix $a\in K$. As $\gamma'(a)$ is horizontal at $\gamma(a)$, we may write $\gamma'(a)=E(\gamma(a))$ for $E\in V_{1}$. Choose a linear bijection $\Phi_{a} \colon V_{1} \to V_{1}$ such that $\Phi_{a}(E)=LX_{1}$ for some $L\geq 0$ and preserving the inner product induced by the basis $X_{1}, \ldots, X_{r}$ of $V_{1}$. Since $\mathbb{G}$ is free-nilpotent, $\Phi_{a}$ extends to an isomorphism of the Lie algebra of $\mathbb{G}$ (Lemma 14.1.4 \cite{BLU07}). Since Carnot groups are simply connected Lie groups, such an isomorphism lifts to a Lie group isomorphism $F_{a}\colon \mathbb{G}\to \mathbb{G}$ satisfying $dF_{a}|_{V_{1}}=\Phi_{a}$. The map $F_{a}$ is smooth and $dF_{a}$ preserves $V_{1}$ and $V_{2}$, in particular $F_{a}$ sends horizontal/smooth curves to horizontal/smooth curves. The definition of $F_{a}$ and of $X_{1}, \ldots, X_{r}$ imply that $F_{a}$ has the form $F_{a}(x,y)=(A(x),B(y))$, where $A\colon \mathbb{R}^{r} \to \mathbb{R}^{r}$ is a linear isometry and $B\colon \mathbb{R}^{n-r}\to \mathbb{R}^{n-r}$ is linear.

Recall that $L_{g}\colon \mathbb{G}\to \mathbb{G}$ denotes the left translation $h\mapsto gh$. Define $\varphi\colon [0,1]\to\mathbb{G}$ by:
\[\varphi=F_{a}\circ L_{\gamma(a)^{-1}} \circ \gamma.\]
Clearly $\varphi$ depends heavily on $a$, though to keep the expressions simpler we don't emphasize this in our choice of notation. Notice:
\[\varphi(a)=F_{a}(\gamma(a)^{-1}\gamma(a))=F_{a}(0)=0,\]
since group homomorphisms preserve the identity. Further:
\begin{align*}
\varphi'(a)=dF_{a}\circ dL_{\gamma(a)^{-1}}(E(\gamma(a)))&=dF_{a}(E(0))\\
&=LX_{1}(0)\\
&=(L,0,\ldots,0),
\end{align*}
using left-invariance of $E$ and our choice of $F_{a}$. Since $F_{a}$ and $L_{\gamma(a)^{-1}}$ are smooth with differentials preserving $V_{1}$, \eqref{l1} implies that:
\begin{equation}\label{lc1} \varphi'(t_{0}) \mbox{ exists and is horizontal at }\varphi(t_0)\mbox{ for every }t_0 \in K.\end{equation}
Since the group translation is Euclidean in the first $r$ coordinates, $F_{a}$ acts as a linear isometry when restricted to the first $r$ coordinates. Since $\gamma'$ is continuous on the compact set $K$, we note:
\begin{equation}\label{Lbounded} L \mbox{ is uniformly bounded from above independently of }a\in K. \end{equation}
Further, \eqref{l2} and \eqref{l3} respectively imply that for $1\leq i\leq r$:
\begin{equation}\label{lc2} |\varphi_{i}'(t_{1})-\varphi_{i}'(t_{0})|\leq \eta \mbox{ if }t_{0}, t_{1} \in K \mbox{ and } |t_{1}-t_{0}|<\delta(\eta),\end{equation}
\begin{equation}\label{lc3} \int_{t_0}^{t_{0}+r} |\varphi_{i}' - \varphi_{i}'(t_{0})| \leq \eta r \mbox{ if }t_0 \in K \mbox{ and } 0<r<\delta(\eta).\end{equation}

\begin{lemma}\label{curvecontrol}
Let $a\in K$ and denote $\varphi=F_{a}\circ L_{\gamma(a)^{-1}} \circ \gamma$ as described above. For any $\eta>0$,  $a<t<a+\delta(\eta)$ implies:
\begin{enumerate}
\item $|\varphi_{i}(t)-(t-a)\varphi_{i}'(a)|\leq \eta(t-a)$ for $1\leq i\leq r$,
\item $|\varphi_{ij}(t)|\leq \eta( |\varphi_{i}'(a)| + |\varphi_{j}'(a)| +\eta)(t-a)^{2}$ for $1\leq j<i\leq r$.
\end{enumerate}
\end{lemma}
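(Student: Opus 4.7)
The plan is to prove part (1) directly from the fundamental theorem of calculus together with the Lebesgue-point bound \eqref{lc3}, and to prove part (2) by inserting the linear approximation of $\varphi$ into the area integral given by Proposition~\ref{horizontalequation}, exploiting the antisymmetry in $i,j$ so that the leading constant-drift terms cancel.

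For part (1), since $\varphi_{i}$ is absolutely continuous and $\varphi_{i}(a)=0$, I would write
\[
\varphi_{i}(t)-(t-a)\varphi_{i}'(a)=\int_{a}^{t}\bigl(\varphi_{i}'(s)-\varphi_{i}'(a)\bigr)\dd s,
\]
take absolute values, and apply \eqref{lc3} with $t_{0}=a$ and $r=t-a<\delta(\eta)$ to conclude $|\varphi_{i}(t)-(t-a)\varphi_{i}'(a)|\leq\eta(t-a)$.

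For part (2), since $\varphi$ is horizontal and $\varphi(a)=0$, Proposition~\ref{horizontalequation} gives
\[
\varphi_{ij}(t)=\tfrac{1}{2}\int_{a}^{t}\bigl(\varphi_{i}\varphi_{j}'-\varphi_{j}\varphi_{i}'\bigr)\dd s.
\]
I would split the integrand using the decompositions
\[
\varphi_{i}(s)=(s-a)\varphi_{i}'(a)+R_{i}(s),\qquad\varphi_{i}'(s)=\varphi_{i}'(a)+r_{i}(s),
\]
where by part (1) $|R_{i}(s)|\leq\eta(s-a)$, and by \eqref{lc3} $\int_{a}^{t}|r_{i}|\dd s\leq\eta(t-a)$ (and similarly for $j$). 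Expanding the product $\varphi_{i}\varphi_{j}'-\varphi_{j}\varphi_{i}'$, the two genuinely quadratic constant-drift terms are $(s-a)\varphi_{i}'(a)\varphi_{j}'(a)-(s-a)\varphi_{j}'(a)\varphi_{i}'(a)$, which cancel pointwise by antisymmetry. The remaining six cross-terms are estimated termwise: for instance
\[
\Bigl|\int_{a}^{t}(s-a)\varphi_{i}'(a)r_{j}(s)\dd s\Bigr|\leq(t-a)|\varphi_{i}'(a)|\cdot\eta(t-a),
\]
\[
\Bigl|\int_{a}^{t}R_{i}(s)\varphi_{j}'(a)\dd s\Bigr|\leq|\varphi_{j}'(a)|\cdot\tfrac{\eta}{2}(t-a)^{2},
\]
\[
\Bigl|\int_{a}^{t}R_{i}(s)r_{j}(s)\dd s\Bigr|\leq\eta(t-a)\cdot\eta(t-a),
\]
and analogously for the three terms obtained by swapping $i$ and $j$. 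Summing gives a bound of the form $C\eta(|\varphi_{i}'(a)|+|\varphi_{j}'(a)|+\eta)(t-a)^{2}$; since $\eta$ is arbitrary, applying the argument to $\eta/C$ in place of $\eta$ (and shrinking $\delta$ accordingly) yields exactly the bound stated.

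The main (mild) obstacle is simply being systematic about the six cross-terms and checking that they all factor cleanly into a product of an $L^{1}$-bound on $R_{i},R_{j}$ or $r_{i},r_{j}$ with a constant or linear factor; the structural point — that the dangerous order-$(t-a)^{2}$ terms coming from the ``ballistic'' part of $\varphi$ cancel by the antisymmetry of the area form — is what makes the clean bound in terms of $\eta$ (and not of $|\varphi'(a)|^{2}$) possible.
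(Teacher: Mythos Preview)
Your argument is correct and is essentially the paper's proof: both derive (1) from the fundamental theorem and \eqref{lc3}, and for (2) both expand $\varphi_i\varphi_j'-\varphi_j\varphi_i'$ around the linear drift, use the antisymmetry to cancel the $(s-a)\varphi_i'(a)\varphi_j'(a)$ terms, and bound the remainder with part (1) and \eqref{lc3}. One small correction: your final rescaling step is both unnecessary and slightly off --- if you actually sum your six estimates and include the factor $\tfrac12$, you get $|\varphi_{ij}(t)|\le \eta\bigl(\tfrac34|\varphi_i'(a)|+\tfrac34|\varphi_j'(a)|+\eta\bigr)(t-a)^2$, which is already within the stated bound; and replacing $\eta$ by $\eta/C$ would only yield the conclusion for $t<a+\delta(\eta/C)$, not $t<a+\delta(\eta)$ as the lemma asserts.
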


\begin{proof}
Using $\varphi(a)=0$ and \eqref{lc3} gives
\[|\varphi_{i}(t)-(t-a)\varphi_{i}'(a)|\leq \int_{a}^{t} |\varphi_{i}'-\varphi_{i}'(a)| \leq \eta(t-a)\]
for any $a<t<a+\delta(\eta)$ and $1\leq i\leq r$. This proves (1).

Fix $a<t<a+\delta(\eta)$ and $i>j$. Since $\varphi$ is a horizontal curve and $\varphi(a)=0$, Proposition \ref{horizontalequation} gives:
\[\varphi_{ij}(t)=\frac{1}{2} \int_{a}^{t}(\varphi_{i} \varphi_{j}' - \varphi_{j}\varphi_{i}').\]
We estimate as follows:
\begin{align*}
& \int_{a}^{t} |\varphi_{i}(s)\varphi_{j}'(s)-(s-a)\varphi_{i}'(a)\varphi_{j}'(a)| \dd s\\
&\qquad \leq \int_{a}^{t} \Big(|\varphi_{i}(s)\varphi_{j}'(s)-(s-a)\varphi_{i}'(a)\varphi_{j}'(s)|\\
&\qquad \qquad \qquad \qquad +|(s-a)\varphi_{i}'(a)\varphi_{j}'(s)-(s-a)\varphi_{i}'(a)\varphi_{j}'(a)|\Big) \dd s\\
&\qquad \leq \int_{a}^{t} |\varphi_{j}'(s)||\varphi_{i}(s)-(s-a)\varphi_{i}'(a)|+(s-a)|\varphi_{i}'(a)||\varphi_{j}'(s)-\varphi_{j}'(a)|\dd s.
\end{align*}
For the first term, we use the fact that \eqref{lc3} implies
\[ \int_{a}^{t} |\varphi_{j}'(s)-\varphi_{j}'(a)| \dd s\leq \eta (t-a)\]
and (1) implies $|\varphi_{i}(s)-(s-a)\varphi_{i}'(a)|\leq \eta (s-a)$ for $s\in (a,t)$. Hence:
\begin{align*}
\int_{a}^{t} |\varphi_{j}'(s)||\varphi_{i}(s)-(s-a)\varphi_{i}'(a)| \dd s &\leq \eta \int_{a}^{t} |\varphi_{j}'(s)|(s-a)\dd s\\
&\leq \eta(t-a)\int_{a}^{t}(|\varphi_{j}'(s)-\varphi_{j}'(a)|+|\varphi_{j}'(a)|) \dd s\\
&\leq \eta(\eta+|\varphi_{j}'(a)|)(t-a)^{2}.
\end{align*}
For the second term we estimate as follows:
\begin{align*}
\int_{a}^{t} (s-a)|\varphi_{i}'(a)||\varphi_{j}'(s)-\varphi_{j}'(a)|\dd s &\leq |\varphi_{i}'(a)|(t-a)\int_{a}^{t} |\varphi_{j}'(s)-\varphi_{j}'(a)| \dd s\\
&\leq \eta |\varphi_{i}'(a)| (t-a)^{2}.
\end{align*}
Combining the estimates of each term gives
\[\int_{a}^{t} |\varphi_{i}(s)\varphi_{j}'(s)-(s-a)\varphi_{i}'(a)\varphi_{j}'(a)| \dd s \leq \eta(|\varphi_{i}'(a)|+|\varphi_{j}'(a)|+\eta)(t-a)^{2}.\]
In exactly the same way, we find:
\[\int_{a}^{t} |\varphi_{j}(s) \varphi_{i}'(s)-(s-a)\varphi_{i}'(a)\varphi_{j}'(a)| \dd s \leq \eta(|\varphi_{i}'(a)|+|\varphi_{j}'(a)|+\eta)(t-a)^{2}.\]
Using the triangle inequality then gives:
\[\int_{a}^{t} (\varphi_{i}\varphi_{j}'-\varphi_{j}\varphi_{i}') \leq 2\eta (|\varphi_{i}'(a)|+|\varphi_{j}'(a)|+\eta)(t-a)^2.\]
This proves (2).
\end{proof}

\subsection{Interpolation in the gaps}
Write $[m,M]\setminus K=\cup_{l \geq 1} (a_l,b_l)$ for some $a_l, b_l \in K$. Notice $(b_l - a_l)\to 0$. Consequently we may choose $\varepsilon_{l}\downarrow 0$ such that $(b_{l}-a_{l})<\min\{\varepsilon_{l}, \delta(\varepsilon_{l})\}$ for sufficiently large $l$. Using the definition of $\delta$, \eqref{lc2} and Lemma \ref{curvecontrol} for large $l$, and simply choosing $\varepsilon_{l}$ to be large for small $l$, we can ensure that for every $l\geq 1$, the following inequalities hold with $a=a_{l}, b=b_{l}$, $\varepsilon=\varepsilon_{l}$ and $\varphi$ dependent on $l$ through $a_{l}$ as before:
\begin{align}
(b-a)&\leq \varepsilon, \label{int1}\\
|\varphi_{i}'(b)-\varphi_{i}'(a)|&\leq \varepsilon &&\mbox{ if }1\leq i\leq r, \label{int2}\\
|\varphi_{i}(b)-(b-a)\varphi_{i}'(a)| &\leq \varepsilon (b-a) &&\mbox{ if }1\leq i\leq r, \label{int3}
\end{align}
and:
\begin{equation}
|\varphi_{ij}(b)|\leq \varepsilon( |\varphi_{i}'(a)| + |\varphi_{j}'(a)| + \varepsilon)(b-a)^{2} \qquad \mbox{ if }1\leq j<i\leq r. \label{int4}
\end{equation}

\begin{lemma}\label{interpolatingcurve}
Fix $l\geq 1$ and let $a=a_{l},\, b=b_{l}$ and $\varepsilon=\varepsilon_{l}$. Then there is a constant $C$ (independent of $l$) and a $C^1$ horizontal curve $\psi \colon [a,b] \to \mathbb{G}$ (depending on $l$) satisfying:
\begin{enumerate}
\item $\psi(a)=\gamma(a)$ and $\psi(b)=\gamma(b)$,
\item $\psi'(a)=\gamma'(a)$ and $\psi'(b)=\gamma'(b)$,
\item $|\psi'-\gamma'(a)|\leq C\varepsilon$ on $[a,b]$.
\end{enumerate}
\end{lemma}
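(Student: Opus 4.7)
Via the normalizing map $\Psi_a := F_a\circ L_{\gamma(a)^{-1}}$, the transformed curve $\varphi=\Psi_a\circ\gamma$ satisfies $\varphi(a)=0$ and $\varphi'(a)=(L,0,\ldots,0)$. Since $\Psi_a$ and $\Psi_a^{-1}$ send horizontal $C^1$ curves to horizontal $C^1$ curves and have differentials of norm bounded independently of $a\in K$ (using \eqref{Lbounded} for $dF_a$), it is enough to construct a $C^1$ horizontal curve $\tilde\psi:[a,b]\to\mathbb{G}$ with $\tilde\psi(a)=0$, $\tilde\psi(b)=\varphi(b)$, $\tilde\psi'(a)=(L,0,\ldots,0)$, $\tilde\psi'(b)=\varphi'(b)$, and $|\tilde\psi'(t)-(L,0,\ldots,0)|\leq C\varepsilon$ on $[a,b]$; then $\psi:=\Psi_a^{-1}\circ\tilde\psi$ will satisfy (1)--(3). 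By Proposition~\ref{horizontalequation}, such a $\tilde\psi$ is the horizontal lift from $0$ of some $C^1$ curve $\sigma=(\sigma_1,\ldots,\sigma_r):[a,b]\to\mathbb{R}^r$, and the conditions on $\tilde\psi$ translate into the boundary conditions $\sigma(a)=0$, $\sigma(b)=(\varphi_1(b),\ldots,\varphi_r(b))$, $\sigma'(a)=(L,0,\ldots,0)$, $\sigma'(b)=(\varphi_1'(b),\ldots,\varphi_r'(b))$, the signed-area identities
\[\tfrac{1}{2}\int_a^b(\sigma_i\sigma_j'-\sigma_j\sigma_i')\dd t=\varphi_{ij}(b)\qquad\text{for }1\leq j<i\leq r,\]
and the pointwise derivative bound $|\sigma'(t)-(L,0,\ldots,0)|\leq C\varepsilon$ on $[a,b]$.

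I would construct $\sigma$ as $\sigma_0+\sum_{i>j}p_{ij}$ in two stages. First, let $\sigma_0$ be the componentwise cubic Hermite interpolant of the endpoint data; from \eqref{int2}, \eqref{int3} the Hermite coefficient formula gives $|\sigma_0'-(L,0,\ldots,0)|\leq C\varepsilon$. Define the residuals
\[\Delta_{ij}:=\varphi_{ij}(b)-\tfrac{1}{2}\int_a^b(\sigma_{0,i}\sigma_{0,j}'-\sigma_{0,j}\sigma_{0,i}')\dd t;\]
a routine computation using \eqref{int3}, \eqref{int4}, and the bound on $\sigma_0'$ gives $|\Delta_{i1}|\leq C\varepsilon(L+\varepsilon)(b-a)^2$ for $i\geq 2$ and $|\Delta_{ij}|\leq C\varepsilon^2(b-a)^2$ for $i>j\geq 2$. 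Second, choose pairwise disjoint subintervals $I_{ij}\subset(a,b)$, and take each $p_{ij}$ to be $C^1$, supported in $I_{ij}$, vanishing together with its derivative at the endpoints of $I_{ij}$, and with only its $i$-th and $j$-th coordinates nonzero (allowing also the first coordinate when $L<\varepsilon$ and $j=1$). Two regimes are needed. When $j=1$ and $L\geq\varepsilon$, a \emph{linear} perturbation whose $i$-th coordinate is $\alpha_{i1}f_{i1}(t)$ (with $f_{i1}$ a fixed nonnegative bump and $\alpha_{i1}\sim\Delta_{i1}/(L(b-a))$) produces signed area $\approx \alpha_{i1}L\int f_{i1}\sim\Delta_{i1}$ in the $(i,1)$-plane with derivative of order $\Delta_{i1}/(L(b-a)^2)\leq C\varepsilon$. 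Otherwise, a \emph{loop} perturbation consisting of $\alpha f(t)$ and $\beta g(t)$ in the two relevant coordinates, with $\alpha,\beta\sim\sqrt{|\Delta_{ij}|}$, gives signed area $\sim \alpha\beta\int fg'\sim|\Delta_{ij}|$ with derivative of order $\sqrt{|\Delta_{ij}|}/(b-a)\leq C\varepsilon$.

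The main obstacle is controlling the \emph{cross}-contributions: each $p_{ij}$ also alters the signed area in every plane $(i,k)$ or $(j,k)$ with $k\notin\{i,j\}$ through cross-terms with $\sigma_0$, and several of these a priori have the same order as the targets $\Delta_{k,l}$ in the neighbouring planes. I would require the loop bumps $f,g$ to have $\int f=\int g=0$: then integration by parts, combined with the smoothness of $\sigma_0'$ on the short interval $I_{ij}$, reduces each loop-cross-contribution to an integral proportional to the oscillation of $\sigma_0'$ over $I_{ij}$, hence smaller by a factor $|I_{ij}|/(b-a)$. The resulting equations for the amplitudes form a near-diagonal linear system that, after ordering the pairs with $j=1$ before those with $j\geq 2$ and rescaling, has a block-triangular dominant part and is invertible by a small-perturbation argument, with amplitudes of the orders described above. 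Because the $I_{ij}$ are pairwise disjoint, at each $t\in[a,b]$ at most one $p_{ij}$ is nonzero and $|\sigma'(t)-(L,0,\ldots,0)|\leq|\sigma_0'(t)-(L,0,\ldots,0)|+\max_{i>j}\|p_{ij}'\|_{\infty}\leq C\varepsilon$. The horizontal lift of $\sigma$ is the required $\tilde\psi$, and $\psi:=\Psi_a^{-1}\circ\tilde\psi$ completes the construction.
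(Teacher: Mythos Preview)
Your reduction to the normalized curve $\varphi$ and the transfer back via $\Psi_a^{-1}$ are correct and coincide with the paper's Claim~\ref{switch}, but your construction of the interpolating curve in $\mathbb{R}^r$ takes a genuinely different route. The paper splits the interval in half: on the second half a cubic-based lift $\beta$ absorbs the nontrivial endpoint and derivative data (Claim~\ref{velocityfix}), leaving an intermediate target $\tilde{q}$ whose horizontal part is exactly $(Lh,0,\ldots,0)$; on the first half a curve $\alpha$ then runs along the $x_1$-axis, making one excursion per subinterval into a single $(x_i,x_j)$-plane to fix one vertical coordinate (Claim~\ref{interpolationsimplified}). Because all horizontal coordinates of $\alpha$ other than the active ones are \emph{identically zero} on each subinterval, a vertical coordinate fixed at one stage is never disturbed later (the one residual interaction with $x_1$ in Case~3 is killed by a mean-zero/orthogonality argument). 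There is no system to invert.

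Your plan instead perturbs a global Hermite interpolant $\sigma_0$ whose components are all nonzero, and the gap lies in the sentence ``the resulting equations for the amplitudes form a near-diagonal linear system.'' First, the loop contribution to its own plane is $\alpha\beta\int fg'$, which is \emph{quadratic} in the amplitudes you are solving for, so the system is nonlinear; with $\alpha,\beta\sim\sqrt{|\Delta_{ij}|}$ the map is not even differentiable at $\Delta_{ij}=0$, and the sign of the area cannot be varied continuously through zero. Second, even granting a linearization, the off-diagonal-to-diagonal ratios you obtain---$O(1/N)$ for the cross-effect of a linear bump $p_{i1}$ on a loop target $A_{ik}$ (no mean-zero gain there), and $O(1/N^2)$ for the mean-zero loop crossings---are governed by the fixed constant $N=r(r-1)/2$, not by a small parameter, so neither a Neumann series nor a naive contraction argument applies as stated. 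The scheme can likely be repaired (for instance, make each loop asymmetric with one leg of fixed amplitude $\varepsilon(b-a)$ so the own-plane area becomes linear and signed in the remaining amplitude, then exploit the genuine block-triangularity: the linear block is diagonal and loop-to-linear feedback is $O(1/N^2)$), but these are precisely the delicate steps and they are missing. The paper's device of zeroing the non-active horizontal coordinates sidesteps the whole issue.
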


\begin{claim}\label{switch}
Suppose the conclusion of Lemma \ref{interpolatingcurve} holds for some fixed $l$ with $\gamma$ replaced by $\varphi$. Then it holds for the same $l$ with the original curve $\gamma$.
\end{claim}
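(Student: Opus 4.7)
The plan is to pull back the interpolating curve for $\varphi$ by inverting the change of coordinates that produced $\varphi$ from $\gamma$. Concretely, if $\tilde{\psi}\colon[a,b]\to\mathbb{G}$ is the $C^{1}$ horizontal curve satisfying the conclusion of Lemma~\ref{interpolatingcurve} for $\varphi$, I would set
\[
\psi := L_{\gamma(a)}\circ F_{a}^{-1}\circ \tilde{\psi},
\]
which is the natural candidate because $\varphi = F_{a}\circ L_{\gamma(a)^{-1}}\circ\gamma$ by construction.

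First I would check that $\psi$ is a $C^{1}$ horizontal curve: smoothness is immediate from the smoothness of $F_{a}^{-1}$ and $L_{\gamma(a)}$, while horizontality follows because $dF_{a}^{-1}$ preserves $V_{1}$ (as $dF_{a}$ does) and left translations preserve the horizontal distribution. The boundary conditions (1) and (2) of Lemma~\ref{interpolatingcurve} for $\psi$ then reduce, via the identities $\tilde{\psi}(a)=\varphi(a)=0$, $\tilde{\psi}(b)=\varphi(b)=F_{a}(\gamma(a)^{-1}\gamma(b))$, $\tilde{\psi}'(a)=\varphi'(a)$, $\tilde{\psi}'(b)=\varphi'(b)$, to straightforward applications of $L_{\gamma(a)}\circ F_{a}^{-1}$ and its differential.

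For condition (3), the key observation is that in the coordinates of Definition~\ref{freegroupstep2}, $F_{a}$ is a linear map and $L_{\gamma(a)}$ is an affine map whose differential is a constant linear map $T_{a} := d(L_{\gamma(a)}\circ F_{a}^{-1})$ on $\mathbb{R}^{n}$. Hence
\[
\psi'(t)-\gamma'(a) = T_{a}\bigl(\tilde{\psi}'(t)-\varphi'(a)\bigr),
\]
and the hypothesis $|\tilde{\psi}'-\varphi'(a)|\leq C\varepsilon$ yields
\[
|\psi'(t)-\gamma'(a)|\leq \|T_{a}\|\cdot C\varepsilon.
\]

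What remains, and is the only non-trivial step, is to bound $\|T_{a}\|$ uniformly in $a\in K$. On $V_{1}$, $F_{a}$ acts by the isometry $A$, so $\|A\|=1$. On $V_{2}$, the component $B$ is determined by $A$ through the Lie bracket relations $BX_{kj}=[AX_{k},AX_{j}]$, so $\|B\|$ is bounded in terms of $\|A\|^{2}=1$ and the structure constants, giving a uniform bound on $\|F_{a}^{-1}\|$. For $dL_{\gamma(a)}$, the explicit formula from Definition~\ref{freegroupstep2} shows that its norm is controlled by $1+|\gamma(a)|$, and $\gamma([0,1])$ is compact because $\gamma$ is continuous; hence $\|dL_{\gamma(a)}\|$ is uniformly bounded in $a\in K$. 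Absorbing $\sup_{a\in K}\|T_{a}\|$ into the bounded constant $C$ yields the conclusion. The only potential obstacle is the uniform control of the $V_{2}$ part of $F_{a}^{-1}$, but this is immediate from the structural identity above together with compactness of $K$.
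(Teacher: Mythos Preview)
Your argument is correct, and your route differs from the paper's in a way worth noting. The paper pulls back by the same map $L_{\gamma(a)}\circ F_{a}^{-1}$, but for condition~(3) it does \emph{not} invoke a global linear bound on the differential $T_a$. Instead it first observes that in the first $r$ coordinates the map is an isometry (so the horizontal estimate $|\eta_i'-\gamma_i'(a)|\leq C\varepsilon$ is immediate), and then recovers the vertical estimate from the horizontality relation $\eta_{ij}'=\tfrac12(\eta_i\eta_j'-\eta_j\eta_i')$, together with $|\eta_i(t)-\gamma_i(a)|\leq C(t-a)\leq C\varepsilon$. In particular the paper never needs to control the $V_2$-block $B^{-1}$ of $F_a^{-1}$ at all.

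Your approach is cleaner: recognizing that $F_a$ is linear and $L_{\gamma(a)}$ is affine in the step-2 coordinates lets you write $\psi'(t)-\gamma'(a)=T_a(\tilde\psi'(t)-\varphi'(a))$ directly, reducing everything to a single operator-norm bound. The one place where you should be a touch more careful is the jump from bounding $\|B\|$ to bounding $\|F_a^{-1}\|$: the structural identity you wrote gives $\|B\|$, not $\|B^{-1}\|$. This is easily patched either by applying the same identity to $F_a^{-1}$ (whose $V_1$-part $A^{-1}$ is again an isometry), or by noting that $A$ ranges over the compact group $O(r)$ and $A\mapsto B(A)^{-1}$ is continuous. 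Compactness of $K$ itself plays no role in bounding $\|F_a^{-1}\|$; it is only needed for $\|dL_{\gamma(a)}\|$ via compactness of $\gamma([0,1])$.
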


\begin{proof}
Choose $\psi$ satisfying the conclusions of Lemma \ref{interpolatingcurve} with $\gamma$ replaced by $\varphi$. Define a $C^{1}$ horizontal curve $\eta\colon \mathbb{G}\to \mathbb{G}$ by $\eta= L_{\gamma(a)}\circ F_{a}^{-1}\circ \psi$. Using the definition $\varphi=F_{a}\circ L_{\gamma(a)^{-1}} \circ \gamma$, it is straightforward to check that:
\[\eta(a)=\gamma(a)\mbox{ and } \eta(b)=\gamma(b),\]
\[\eta'(a)=\gamma'(a) \mbox{ and }\eta'(b)=\gamma'(b).\]
Recall that in the first $r$ coordinates translations are Euclidean and $F_{a}$ acts as a linear isometry. Hence:
\begin{equation}\label{initial}|\eta_{i}'-\gamma_{i}'(a)|\leq C\varepsilon \mbox{ on }[a,b] \mbox{ for }1\leq i\leq r.\end{equation}
It remains to check $|\eta_{ij}'-\gamma_{ij}'(a)|\leq C\varepsilon$ on $[a,b]$ for $i>j$. We estimate as follows:
\begin{align*}
|\eta_{ij}'-\gamma_{ij}'(a)|&= \frac{1}{2}| \eta_{i}\eta_{j}'-\eta_{j}\eta_{i}'-\gamma_{i}(a)\gamma_{j}'(a)+\gamma_{j}(a)\gamma_{i}'(a)|\\
&\leq \frac{1}{2}(|\eta_{i}\eta_{j}'-\gamma_{i}(a)\gamma_{j}'(a)| + |\eta_{j}\eta_{i}'-\gamma_{j}(a)\gamma_{i}'(a)|).
\end{align*}
We show how to estimate $|\eta_{i}\eta_{j}'-\gamma_{i}(a)\gamma_{j}'(a)|$; the argument for the other term is similar. Notice:
\[|\eta_{i}\eta_{j}'-\gamma_{i}(a)\gamma_{j}'(a)|\leq |\eta_{i}||\eta_{j}'-\gamma_{j}'(a)|+|\gamma_{j}'(a)||\eta_{i}-\gamma_{i}(a)|.\]
The term $|\eta_{i}|$ is bounded by a constant depending only on a compact set containing the image of $\gamma$, while \eqref{initial} states $|\eta_{j}'-\gamma_{j}'(a)|\leq C\varepsilon$. The term $|\gamma_{j}'(a)|$ is bounded by a constant independent of $l$ because $a\in K$ and $\gamma'$ is continuous on the compact set $K$. Combining this with \eqref{initial} shows that $\eta_{i}'$ is bounded by a constant independent of $l$. Using also $\gamma_{i}(a)=\eta_{i}(a)$ gives:
\begin{align*}
|\eta_{i}(t)-\gamma_{i}(a)|&=|\eta_{i}(t)-\eta_{i}(a)|\\
&\leq \int_{a}^{t} |\eta_{i}'|\\
&\leq C(t-a)\\
&\leq C\varepsilon.
\end{align*}
We deduce $|\eta_{ij}'-\gamma_{ij}'(a)|\leq C\varepsilon$ on $[a,b]$ if $i>j$, with $C$ independent of $l$, as required.
\end{proof}

We use the remainder of this subsection to prove Lemma \ref{interpolatingcurve} with $\varphi$ in place of $\gamma$, using the estimates \eqref{int1}--\eqref{int4}. To simplify notation, let $h=(b-a)/2$, $q=\varphi(b)$, $v=\varphi'(a)=(L,0,\ldots,0)$ and $w=\varphi'(b)$. Using \eqref{int1}--\eqref{int4} gives:
\begin{align}
& 0<h\leq \varepsilon, \label{not1}\\
& v=(L,0,\ldots, 0), \, w \mbox{ is horizontal at }q, \, |w_{i}-v_{i}|\leq \varepsilon \mbox{ if }1\leq i\leq r,\label{not2}\\
& |q_{1}-2Lh|\leq 2\varepsilon h \mbox{ and } |q_{i}|\leq 2\varepsilon h \mbox{ if } i>1, \label{not3}\\
& |q_{i1}| \leq 4\varepsilon (\varepsilon+L)h^{2} \mbox{ if }i>1 \mbox{ and } |q_{ij}|\leq 4\varepsilon^{2}h^{2} \mbox{ if }i>j>1. \label{not4}
\end{align}

By reparameterization, to prove Lemma \ref{interpolatingcurve} with $\varphi$ in place of $\gamma$, we will construct a suitable curve $\psi$ on $[0,2h]$ instead of $[a, a+2h]=[a,b]$. Using \eqref{not1}--\eqref{not4} one can forget completely about the original curves $\gamma$ and $\varphi$; we desire a $C^{1}$ horizontal curve $\psi\colon [0,2h]\to \mathbb{G}$ satisfying:
\begin{enumerate}
\item $\psi(0)=0$ and $\psi(2h)=q$,
\item $\psi'(0)=v$ and $\psi'(2h)=w$,
\item $|\psi'-v|\leq C\varepsilon$ on $[0,2h]$.
\end{enumerate}

To do this, we separately construct curves $\alpha$ on $[0,h]$ and $\beta$ on $[h, 2h]$, then define $\psi$ as the concatenation of $\alpha$ and $\beta$. Intuitively, the curve $\alpha$ on $[0,h]$ will fix the vertical component of the desired endpoint $q$, while the curve $\beta$ on $[h,2h]$ will fix the horizontal component of $q$ and the derivative $w$ at the final point.

We first construct the curve $\beta$ on $[h,2h]$. This will be relatively easy: the vertical coordinates of the start point of $\beta$ (which will be the endpoint of $\alpha$) have some flexibility, so we can define $\beta$ as the lift of a curve in $\mathbb{R}^r$ without too many constraints.

\begin{claim}\label{velocityfix}
There exists $\tilde{q}\in \mathbb{G}$ and a $C^1$ horizontal curve $\beta \colon [h, 2h]\to \mathbb{G}$ such that:
\begin{enumerate}
\item $\beta(h)=\tilde{q}$ and $\beta(2h)=q$,
\item $\beta'(h)=v$ and $\beta'(2h)=w$,
\item $\tilde{q}_{1}=Lh$, $\tilde{q}_{i}=0$ for $i>1$, and $v$ is horizontal at $\tilde{q}$,
\item $|\tilde{q}_{i1}|\leq C\varepsilon (L+\varepsilon) h^{2}$ for $i>1$ and $|\tilde{q}_{ij}|\leq C\varepsilon^{2}h^{2}$ for $i>j>1$,
\item $|\beta'-v|\leq C\varepsilon$.
\end{enumerate}
\end{claim}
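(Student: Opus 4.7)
The plan is to construct $\beta$ as the horizontal lift of a smooth curve $\phi \colon [h, 2h] \to \mathbb{R}^r$ and to choose the vertical coordinates of the base point $\tilde{q}$ so that this lift hits $q$ at time $2h$. The horizontal coordinates of $\tilde{q}$ are forced by (3): take $\tilde{q}_1 = Lh$ and $\tilde{q}_i = 0$ for $i > 1$; a direct substitution into the horizontality equations of Proposition \ref{horizontalequation} shows that $v = (L, 0, \ldots, 0)$ is horizontal at this point (both sides vanish, since $v_{ij} = 0$ and either $v_i = 0$ or $\tilde{q}_i = 0$). For $\phi$, I would take each $\phi_i$ to be the unique cubic Hermite polynomial with the natural boundary data: $\phi_1$ matches $Lh, q_1$ at $h, 2h$ with derivatives $L, w_1$; for $i > 1$, $\phi_i$ matches $0, q_i$ with derivatives $0, w_i$. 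After rescaling to $[0, 1]$, the coefficients of these cubics are controlled linearly by $|q_1 - 2Lh|$, $|w_1 - L|h$, $|q_i|$, $|w_i|h$, all of which are $O(\varepsilon h)$ by \eqref{not2}--\eqref{not3}. The explicit Hermite formula then yields pointwise bounds $|\phi_1' - L|, |\phi_i'| \leq C\varepsilon$ together with $|\phi_1| \leq C(L + \varepsilon)h$ and $|\phi_i| \leq C\varepsilon h$ for $i > 1$ throughout $[h, 2h]$.

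Next, define the vertical coordinates of $\tilde{q}$ by
\[
\tilde{q}_{ij} = q_{ij} - \tfrac{1}{2}\int_h^{2h}(\phi_i \phi_j' - \phi_j \phi_i'),
\]
and let $\beta$ be the horizontal lift of $\phi$ starting at $\tilde{q}$ (Definition \ref{horizontallift}). Then $\beta$ is $C^1$ and horizontal, $\beta(h) = \tilde{q}$ by definition, and $\beta(2h) = q$ by the choice of $\tilde{q}_{ij}$, which is (1). The identity $\beta'(h) = v$ holds because the first $r$ coordinates of $\beta'(h)$ equal $\phi'(h) = (L, 0, \ldots, 0)$ and the vertical coordinates of $\beta'(h)$ are determined by horizontality at $\tilde{q}$, hence match those of $v$; the identity $\beta'(2h) = w$ follows in the same way using the hypothesis in \eqref{not2} that $w$ is horizontal at $q$. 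This proves (2).

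Finally, (4) and (5) reduce to size estimates on $\phi_i\phi_j' - \phi_j\phi_i'$. The pointwise bounds above give $|\phi_i\phi_j' - \phi_j\phi_i'| \leq C(L + \varepsilon)\varepsilon h$ when $j = 1$ and $|\phi_i\phi_j' - \phi_j\phi_i'| \leq C\varepsilon^2 h$ when $j > 1$. Integrating over an interval of length $h$ and adding the assumed bounds \eqref{not4} on $q_{ij}$ gives (4). For (5), the horizontal components of $\beta' - v$ are controlled directly by the Hermite estimates on $\phi'$, while the vertical components $\beta_{ij}' = \frac{1}{2}(\phi_i\phi_j' - \phi_j\phi_i')$ obey the same pointwise bound, which is $\leq C\varepsilon$ once one absorbs the factor $(L + \varepsilon)h$ into $C$ via the uniform bound \eqref{Lbounded} on $L$ and the inequality $h \leq \varepsilon_l \leq \varepsilon_1$. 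No step is conceptually hard; the only real care required is to track constants so that $C$ remains independent of the gap index $l$, which is ensured by \eqref{Lbounded} and the boundedness of the sequence $\varepsilon_l$.
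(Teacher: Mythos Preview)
Your proposal is correct and follows essentially the same strategy as the paper: both construct the projection to $\mathbb{R}^r$ as the cubic Hermite interpolant matching the prescribed endpoints and derivatives, then take the horizontal lift and read off $\tilde{q}$ from the resulting vertical displacement. The only cosmetic difference is orientation: the paper lifts the (time-reversed) cubic starting from the known point $q$ and \emph{defines} $\tilde{q}$ as the endpoint $c(h)$, then sets $\beta(t)=c(2h-t)$, whereas you fix $\tilde{q}_{ij}$ by the explicit formula $q_{ij}-\tfrac12\int(\phi_i\phi_j'-\phi_j\phi_i')$ and lift forward; by uniqueness of the Hermite cubic these produce the same curve $\beta$ and the same $\tilde{q}$.
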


\begin{proof}[Proof of Claim \ref{velocityfix}]
We start by defining a curve $c\colon [0,h]\to \mathbb{G}$ starting at $q$. The point $\tilde{q}$ will then be the endpoint of $c$ and the curve $\beta$ will be obtained by reversing the orientation of $c$ (and reparameterizing so the domain is $[h,2h]$). 

Denote $Q=(q_{1}, \ldots, q_{r})$, $W=-(w_{1}, \ldots, w_{r})$ and $V=-(v_{1}, \ldots, v_{r})$. Define a $C^1$ curve $\tilde{c}\colon [0,h]\to \mathbb{R}^{r}$ by the formula:
\[\tilde{c}(t)=Q+tW+\frac{t^2}{h^2}(-4hV-2hW-3Q)+\frac{t^3}{h^3}(3hV+hW+2Q).\]
Equations \eqref{not2} and \eqref{not3} imply $|V-W|\leq r\varepsilon$ and $|Q+2hV|\leq 2r\varepsilon h$. We claim that $\tilde{c}=(c_{1},\ldots, c_{r})$ has the following properties:
\begin{equation}\label{cbdry} \tilde{c}(0)=(q_{1},\ldots, q_{r}) \text{ and }\tilde{c}(h)=(Lh,0,\ldots, 0),\end{equation}
\begin{equation}\label{cprimebdry} \tilde{c}'(0)=-(w_{1}, \ldots, w_{r}) \text{ and }\tilde{c}'(h)=-(v_{1}, \ldots, v_{r}),\end{equation}
\begin{equation}\label{derivbound} |\tilde{c}'+(v_{1}, \dots, v_{r})|\leq C\varepsilon.\end{equation}

For \eqref{cbdry} the first equality $\tilde{c}(0)=Q$ is obvious. To prove the second, notice:
\begin{align*}
\tilde{c}(h)&=Q+hW-4hV-2hW-3Q+3hV+hW+2Q\\
&=-hV\\
&=(Lh,0,\ldots, 0).
\end{align*}

For \eqref{cprimebdry}, first notice:
\[\tilde{c}'(t)=W+\frac{2t}{h^2}(-4hV-2hW-3Q)+\frac{3t^2}{h^3}(3hV+hW+2Q).\]
Clearly $\tilde{c}'(0)=W=-(w_{1},\ldots, w_{r})$. For the second equality, notice:
\begin{align*}
\tilde{c}'(h)&=W+\frac{2}{h}(-4hV-2hW-3Q)+\frac{3}{h}(3hV+hW+2Q)\\
&= V\\
&=-(v_{1}, \ldots, v_{r}).
\end{align*}

For \eqref{derivbound}, use $|V-W|\leq r\varepsilon$ and $|Q+2hV|\leq 2r\varepsilon h$ to estimate as follows:
\begin{align*}
|\tilde{c}'(t)+(v_{1}, \ldots, v_{r})|&= |\tilde{c}'(t)-V|\\
&\leq |V-W| + \frac{2}{h}|-4hV-2hW-3Q|\\
&\qquad + \frac{3}{h}|3hV+hW+2Q|\\
&\leq C\varepsilon + \frac{2}{h}|-6hV-3Q| +\frac{3}{h}|4hV+2Q|\\
&\leq C\varepsilon.
\end{align*}
This proves \eqref{derivbound}.

In what follows we will need only properties \eqref{cbdry}--\eqref{derivbound} and not the precise definition of $\tilde{c}$. Let $c\colon [0,h] \to \mathbb{G}$ be the horizontal lift of $\tilde{c}$ to $\mathbb{G}$, starting at $q$. Hence for $i>j$ and $t\in [0,h]$:
\begin{equation}\label{verticalc}
c_{ij}(t)=q_{ij}+\frac{1}{2}\int_{0}^{t} (c_{i}c_{j}'-c_{j}c_{i}'),
\end{equation}
and consequently $c_{ij}'=(c_{i}c_{j}'-c_{j}c_{i}')/2$.

Clearly $c$ is a $C^{1}$ horizontal curve and $c(0)=q$. Using \eqref{verticalc} gives:
\begin{align*}
c_{ij}'(0)&=(1/2)(c_{i}(0)c_{j}'(0)-c_{j}(0)c_{i}'(0))\\
&=(1/2)(q_{i}(-w_{j})-q_{j}(-w_{i}))\\
&=-(1/2)(q_{i}w_{j}-q_{j}w_{i})\\
&=-w_{ij}
\end{align*}
since $w$ is horizontal at $q$. Since $c_{i}'(0)=-w_{i}$ for $1\leq i\leq r$, we deduce $c'(0)=-w$. A similar argument shows that $c'(h)=-v$. 

By construction, $|c_{i}'(t)+v_{i}|\leq C\varepsilon$ for $t\in [0,h]$. Since $v=(L,0,\ldots,0)$, we deduce $|c_{1}'|\leq L+C\varepsilon$ and $|c_{i}'|\leq C\varepsilon$ for $i>1$. Recall $c_{i}(0)=q_{i}$ and the estimates $|q_{1}|\leq 2Lh+2\varepsilon h$ and $|q_{i}|\leq 2\varepsilon h$ for $i>1$, which follow from \eqref{not3}. We obtain:
\[|c_{1}|\leq (2Lh+2\varepsilon h) +(L+C\varepsilon)h \leq C(L+\varepsilon)h,\]
\[|c_{i}|\leq 2\varepsilon h+C\varepsilon h\leq C\varepsilon h \mbox{ if } i>1.\]
We estimate the derivatives of the other coordinates of $c$ for $i>j$ as follows (using, in particular, $i>1$):
\begin{align*}
|c_{ij}'|&=\frac{1}{2}|c_{i}c_{j}'-c_{j}c_{i}'|\\
&\leq \frac{1}{2}(|c_{i}||c_{j}'|+|c_{j}||c_{i}'|)\\
&\leq \frac{1}{2}(C\varepsilon \cdot C + C\cdot C\varepsilon)\\
&\leq C\varepsilon.
\end{align*}
We deduce that $|c'+v|\leq C\varepsilon$.

Define $\tilde{q}=c(h)$. As $\tilde{c}(h)=(Lh,0,\ldots, 0)$, we observe $\tilde{q}_{1}=h$ and $\tilde{q}_{i}=0$ for $i>1$. Hence $v=LX_{1}(\tilde{q})$ so $v$ is horizontal at $\tilde{q}$. For $i>1$ we can use \eqref{not4} to estimate:
\begin{align*}
|\tilde{q}_{i1}|&= |c_{i1}(h)|\\
&= \left|q_{i1}+ \frac{1}{2}\int_{0}^{h} (c_{i}c_{1}'-c_{1}c_{i}')\right|\\
&\leq 4\varepsilon (L+\varepsilon) h^{2} + \frac{1}{2}\int_{0}^{h} |c_{i}||c_{1}'|+|c_{1}||c_{i}'|\\
&\leq 4\varepsilon (L+\varepsilon) h^{2}  + \frac{1}{2}\int_{0}^{h} C\varepsilon h \cdot (L+C\varepsilon) + C(L+\varepsilon)h \cdot C\varepsilon \\
&\leq C\varepsilon (L+\varepsilon) h^{2}.
\end{align*}
A similar argument shows that $|\tilde{q}_{ij}|\leq C\varepsilon^{2} h^{2}$ for $i>j>1$.

To summarise, the $C^1$ horizontal curve $c\colon [0,h]\to \mathbb{G}$ and $\tilde{q}\in \mathbb{G}$ satisfy:
\begin{enumerate}
\item $c(0)=q$ and $c(h)=\tilde{q}$,
\item $c'(0)=-w$, and $c'(h)=-v$,
\item $\tilde{q}_{1}=Lh$, $\tilde{q}_{i}=0$ for $i>1$, and $v$ is horizontal at $\tilde{q}$,
\item $|\tilde{q}_{i1}|\leq C\varepsilon (L+\varepsilon) h^{2}$ for $i>1$ and $|\tilde{q}_{ij}|\leq C\varepsilon^{2}h^{2}$ for $i>j>1$,
\item $|c'+v|\leq C\varepsilon$.
\end{enumerate}
Now define $\beta\colon [h,2h]\to \mathbb{G}$ by $\beta(t)=c(2h-t)$. Clearly $\beta$ is a $C^1$ horizontal curve with the properties given in the statement of the claim.
\end{proof}

Fix a curve $\beta$ and a point $\tilde{q}$ as given by Claim \ref{velocityfix}. We now construct the curve $\alpha$ on $[0,h]$. In order to reach the correct endpoint $\tilde{q}$, the curve $\alpha$ will be defined as a concatenation of lifts of curves in $\mathbb{R}^{r}$, each of which enclose prescribed signed areas in various planes, corresponding to the different vertical coordinates we wish to control.

\begin{claim}\label{interpolationsimplified}
There exists a $C^1$ horizontal curve $\alpha \colon [0,h] \to \mathbb{G}$ such that:
\begin{enumerate}
\item $\alpha(0)=0$ and $\alpha(h)=\tilde{q}$,
\item $\alpha'(0)=\alpha'(h)=v$,
\item $|\alpha'-v|\leq C\varepsilon$.
\end{enumerate}
\end{claim}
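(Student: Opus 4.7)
The plan is to build $\alpha$ as a concatenation of $\binom{r}{2}$ short horizontal curves indexed by the pairs $(i,j)$ with $1 \leq j < i \leq r$. Each piece is responsible for producing the single signed area $\tilde{q}_{ij}$ in the $(i,j)$-plane while leaving all other signed areas and the horizontal boundary data unchanged. After concatenating the pieces, the accumulated signed areas reproduce $\tilde{q}_{ij}$ for every pair, so $\alpha(h) = \tilde{q}$.

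Concretely, partition $[0,h]$ into $\binom{r}{2}$ equal subintervals, one per pair $(i,j)$. On each subinterval define $\alpha$ as the horizontal lift of a perturbation of the straight-line trajectory $t \mapsto (Lt, 0, \ldots, 0)$ obtained by adding smooth bumps only in the two coordinates $x_i$ and $x_j$. The bumps are supported in the open subinterval and vanish to first order at its endpoints, so at every junction the projected derivative equals $(L, 0, \ldots, 0)$ and the last $r-1$ horizontal coordinates are zero; Proposition~\ref{horizontalequation} then implies that $v$ is horizontal at every junction point, giving properties (1) and (2).

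The key cancellation is that a perturbation confined to $\{x_i, x_j\}$ can only contribute to signed areas involving $i$ or $j$. For a plane $(k,l)$ disjoint from $\{i,j\}$ the contribution vanishes trivially, and for $(i,m)$ with $m \notin \{j,1\}$ it vanishes because $x_m \equiv 0$ on the piece. For the cross term $(i,1)$ (when $j > 1$) the baseline $x_1 = Lt$ pairs with the $x_i$-bump to give $L\int x_i$, which is killed by demanding that the $x_i$-bump have zero mean; $(j,1)$ is handled identically. Only the $(i,j)$-plane then receives a nonzero contribution from its own piece, and the pair of bumps is scaled so that $\tfrac{1}{2}\int(x_i x_j' - x_j x_i')$ equals $\tilde{q}_{ij}$.

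The main technical obstacle is the velocity budget $|\alpha' - v| \leq C\varepsilon$, which forces bump heights of order $\varepsilon h$ on pieces of duration of order $h$, and one must verify that in this regime every required area is attainable. For $j > 1$ two mean-zero bumps of that height enclose a planar signed area of order $\varepsilon^2 h^2$, which matches the bound $|\tilde{q}_{ij}| \leq C\varepsilon^2 h^2$ from Claim~\ref{velocityfix}(4). For pairs $(i,1)$ a single bump in $x_i$ of prescribed integral $\int x_i = \tilde{q}_{i1}/L$ exploits the baseline contribution $L\int x_i$; since $L$ is bounded above by \eqref{Lbounded} and $|\tilde{q}_{i1}| \leq C\varepsilon(L+\varepsilon)h^2$, this integral has size $O(\varepsilon h^2(1 + \varepsilon/L))$, realizable by a bump of height $O(\varepsilon h)$ whenever $L$ is not very small, with the complementary small-$L$ regime handled by the same two-bump construction as in the $j > 1$ case. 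Scaled copies of a fixed model profile $\rho \in C_c^\infty(0,1)$ supply all the needed parameters, and the first-order vanishing at the subinterval endpoints delivers the required $C^1$ regularity across junctions, yielding property (3).
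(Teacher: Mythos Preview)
Your proposal follows essentially the same architecture as the paper's proof: partition $[0,h]$ into $N=\binom{r}{2}$ equal subintervals, assign to each a single vertical coordinate, and on each piece perturb only the relevant horizontal coordinates so that exactly one signed area is set while all others are preserved (the cross terms with $x_1$ being killed by a zero-mean condition on the bump). The paper also separates the $(i,1)$ case according to whether $L\geq\varepsilon$ or $L<\varepsilon$, just as you do. The only substantive difference is in the explicit perturbations: the paper uses trigonometric profiles (a $1-\cos$ bump in the large-$L$ case, a spiral $\zeta(t)(\cos,\sin)$ in the $j>1$ case) and, in the small-$L$ regime, traces a circle of radius $\sqrt{|\tilde q_{i1}|/\pi}$ at variable speed followed by a straight segment, rather than your two-bump construction. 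Your two-bump alternative for small $L$ is perfectly valid and arguably simpler.

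One caution on the $j>1$ case: you cannot take both bumps to be scalar multiples of \emph{the same} profile $\rho$, since then $A\rho\cdot(B\rho)'-B\rho\cdot(A\rho)'\equiv 0$ and no area is produced. You need two genuinely different shapes (e.g.\ $\rho$ and a translate, or $\rho$ and $\rho'$), and you must also be able to realize both signs of $\tilde q_{ij}$---the paper does this by swapping the formulas for the $i$ and $j$ components. These are trivial fixes, but the phrase ``scaled copies of a fixed model profile $\rho$'' as written does not quite deliver them.
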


\begin{proof}[Proof of Claim \ref{interpolationsimplified}]

Divide $[0,h]$ into $N:=r(r-1)/2$ subintervals
\[I_k=[kh/N,(k+1)h/N], \qquad 0 \leq k \leq (N-1)\]
of length $h/N$. We will define a curve $\alpha$ so that, on each subinterval $I_k$, $\alpha$ fixes one of the $N=n-r$ vertical coordinates that it has to reach, without distorting the other coordinates.

There are two types of vertical coordinates to consider. The first type have the form $i1$ ($i>1$), while the second type have the form $ij$ ($i>j>1$). It will not matter in which order we fix the vertical coordinates, so arbitrarily assign to each interval $I_k$ a vertical coordinate $i1$ ($i>1$) or $ij$ ($i>j>1$) to be fixed. 

We define $\alpha$ on $[0,h]$ inductively. Let $\alpha(0)=0$. Fix $0 \leq k \leq (N-1)$ for which a $C^1$ horizontal curve $\alpha \colon [0,kh/N] \to \mathbb{G}$ has been defined satisfying:
\begin{enumerate}
\item $(\alpha_{1}(kh/N), \ldots , \alpha_{r}(kh/N))=(Lkh/N, 0, \ldots, 0)$,
\item $\alpha_{ij}(kh/N)=\tilde{q}_{ij}$ for the $k$ vertical coordinates $ij$ that have already been fixed,
\item $\alpha_{ij}(kh/N)=0$ whenever $ij$ is a vertical coordinate that remains to be fixed,
\item $\alpha'(0)=\alpha'(kh/N)=v$ if $k>0$,
\item $|\alpha'-v|\leq C\varepsilon$.
\end{enumerate}
We now show how to define $\alpha$ on $I_{k}$, with a different argument in each of three cases.

\begin{case}
Suppose the vertical coordinate to be fixed has the form $i1$ for $i>1$ and additionally $L\geq \varepsilon$ (the case $L<\varepsilon$ is treated separately below).

Fix $\lambda \in \mathbb{R}$ to be chosen later. Define $\tilde{g}\colon [0,h/N]\to \mathbb{R}^{r}$ by:
\[
\tilde{g}_{\mu}(t) = 
  \begin{cases} 
(Lkh/N)+Lt & \text{if } \mu=1, \\
\lambda h \varepsilon (1-\cos(2\pi N t/h))     & \text{if }\mu=i,\\
0 &\text{if }\mu \neq 1,i.
\end{cases}
\]
Notice
\[\tilde{g}(0)=(Lkh/N, 0, \ldots, 0)=(\alpha_{1}(kh/N), \ldots, \alpha_{r}(kh/N))\]
using the inductive hypothesis. 

Let $g\colon [0,h/N]\to \mathbb{G}$ be the horizontal lift of $\tilde{g}$ starting at $\alpha(kh/N)$. Note that the definition of horizontal lift implies
\begin{equation}\label{horizliftcase1}
g_{\mu\nu}(t)=\alpha_{\mu\nu} (kh/N) + \frac{1}{2}\int_{0}^{t} (g_{\mu}g_{\nu}'-g_{\nu}g_{\mu}')
\end{equation}
for every $\mu>\nu$ and $t\in [0,h/N]$. Clearly $g(0)=\alpha(kh/N)$ and
\[(g_{1}(h/N), \ldots, g_{r}(h/N))=(L(k+1)h/N,0,\ldots, 0).\]
If $(\mu,\nu)\neq (i,1)$ then either $g_{\mu}$ or $g_{\nu}$ is identically zero, which implies that $g_{\mu\nu}(h/N)=\alpha_{\mu\nu}(kh/N)$. For $(\mu,\nu)=(i,1)$ recall the assumption that $(i,1)$ remains to be fixed implies that $\alpha_{i1}(kh/N)=0$. Hence integration by parts shows:
\[g_{i1}(h/N)=\frac{1}{2}\int_{0}^{h/N} (g_{i}g_{1}' - g_{1}g_{i}') = \lambda Lh^2 \varepsilon / N.\]
Now choose $\lambda$ such that $\lambda Lh^2 \varepsilon / N=\tilde{q}_{i1}$, so that $g_{i1}(h/N)=\tilde{q}_{i1}$. The inequality $|\tilde{q}_{i1}|\leq C\varepsilon (L+\varepsilon) h^2$ implies that $\lambda \leq C(1+\varepsilon/L)$. Using the assumption $L\geq \varepsilon$ implies:
\begin{equation}\label{lambdabounded} \lambda \mbox{ is bounded by a fixed constant}.\end{equation}

We next claim that $g'(0)=g'(h/N)=(L,0,\ldots, 0)$. These equalities are obvious for the first $r$ coordinates of $g$, since they hold for $\tilde{g}$ and $g$ is the horizontal lift of $\tilde{g}$. For the vertical coordinates, notice that \eqref{horizliftcase1} implies $g_{\mu\nu}'=(g_{\mu}g_{\nu}'-g_{\nu}g_{\mu}')/2$. If $(\mu,\nu)\neq (i,1)$ then either $g_{\mu}$ or $g_{\nu}$ is identically zero, so $g_{\mu\nu}'$ is identically zero. If $(\mu,\nu)=(i,1)$ then the claim holds for the coordinate $i1$ because:
\[g_{i}(0)=g_{i}(h/N)=g_{i}'(0)=g_{i}'(h/N)=0.\]

Since $g_{1}'=L$ and $g_{\mu\nu}'$ is identically zero if $(\mu,\nu)\neq (i,1)$, the following inequality holds:
\[|g'-(L,0,\ldots, 0)| \leq |g_{i}'| + |g_{i1}'|.\]
Clearly
\[|g_{i}'|\leq \lambda h \varepsilon \cdot 2\pi N / h\leq C\varepsilon\]
since \eqref{lambdabounded} states that $\lambda$ is bounded by a constant. Recall that $\gamma'|_{K}$ is continuous so $L$ is bounded by a constant, as noted in \eqref{Lbounded}. For the second term, notice:
\[|g_{i1}'|=\frac{1}{2}|g_{i}g_{1}'-g_{1}g_{i}'|\leq \frac{1}{2}(2 \lambda h \varepsilon \cdot L + Lh \cdot C\varepsilon) \leq C\varepsilon,\]
since $\lambda$, $h$ and $L$ are bounded by constants. Hence $|g'-(L,0,\ldots, 0)| \leq C\varepsilon$.

Define $\alpha(t)=g(t - kh/N)$ for $t\in [kh/N,(k+1)h/N]$. The curve $\alpha$ now satisfies the inductive hypotheses with $k$ replaced by $k+1$.
\end{case}

\begin{case}
Suppose the vertical coordinate to be fixed has the form $i1$ for $i>1$ and instead $L< \varepsilon$. 

In this case $\alpha$ on $I_{k}$ will be defined by concatenating two curves $g$ and $h$ and reparameterizing. Assume that $\tilde{q}_{i1}>0$; a similar argument works in the other cases. We first define the curve $g$ which will be a circle traced out at variable speed; if $\tilde{q}_{i1}<0$ then one should choose a circle whose signed area has the opposite sign, while if $\tilde{q}_{i1}=0$ then one should choose the constant curve $g(t)=(Lkh/N,0,\ldots,0)$. Choose $\lambda=\sqrt{\tilde{q}_{i1}/\pi}>0$. Since $L<\varepsilon$, we have:
\[|\tilde{q}_{i1}| \leq C\varepsilon(L+\varepsilon)h^{2}\leq C\varepsilon^{2} h^{2}.\]
Hence:
\begin{equation}\label{lambdasmall} \lambda \leq C\varepsilon h.\end{equation}
Fix a $C^{1}$ map $\theta \colon [0,h/2N]\to 2\pi$ such that:
\begin{itemize}
\item $\theta(0)=0$ and $\theta(h/2N)=2\pi$,
\item $\theta'(0)=\theta'(h/2N)=L/\lambda$,
\item $|\theta'|\leq C\max \{L/\lambda, 1/h\}$.
\end{itemize}
Define a curve $\tilde{g} \colon [0,h/2N]\to \mathbb{R}^{r}$ by:
\[
\tilde{g}_{\mu}(t) = 
  \begin{cases} 
(Lkh/N)+\lambda \sin (\theta(t)) & \text{if } \mu=1, \\
\lambda \cos (\theta(t)) -\lambda     & \text{if }\mu=i,\\
0 &\text{if }\mu \neq 1,i.
\end{cases}
\]
Notice:
\[\tilde{g}(0)=\tilde{g}(h/2N)=(\alpha_{1}(kh/N), \ldots, \alpha_{r}(kh/N))\]
using the definition of $\tilde{g}$ and the inductive hypotheses. Let $g\colon [0,h/2N]\to \mathbb{G}$ be the horizontal lift of $\tilde{g}$ starting at $\alpha(kh/N)$, so:
\[g_{\mu\nu}(t)=\alpha_{\mu\nu} (kh/N) + \frac{1}{2}\int_{0}^{t} (g_{\mu}g_{\nu}'-g_{\nu}g_{\mu}')\]
for every $\mu>\nu$ and $t\in [0,h/2N]$.

Clearly $g(0)=\alpha(kh/N)$ and
\[(g_{1}(h/2N), \ldots, g_{r}(h/2N))=\tilde{g}(h/2N)=(\alpha_{1}(kh/N), \ldots, \alpha_{r}(kh/N)).\]
If $(\mu,\nu)\neq (i,1)$ then either $g_{\mu}$ or $g_{\nu}$ is identically zero, which implies $g_{\mu\nu}(h/2N)=\alpha_{\mu\nu}(kh/N)$. For $(\mu,\nu)=(i,1)$ recall the assumption that $(i,1)$ remains to be fixed implies that $\alpha_{i1}(kh/N)=0$. An elementary computation shows:
\[g_{i1}(h/2N)=\frac{1}{2}\int_{0}^{h/2N} (g_{i}g_{1}'-g_{1}g_{i}')=\pi \lambda^{2}.\]
Recalling the definition of $\lambda$, we deduce $g_{i1}(h/2N)=\tilde{q}_{i1}$.

We next claim that $g'(0)=g'(h/2N)=(L,0,\ldots, 0)$. These equalities are elementary for the first $r$ coordinates of $g$, since they hold for $\tilde{g}$ and $g$ is the horizontal lift of $\tilde{g}$. For the vertical coordinates, recall the equality $g_{\mu\nu}'=(g_{\mu}g_{\nu}'-g_{\nu}g_{\mu}')/2$. If $(\mu,\nu)\neq (i,1)$ then $g_{\mu}$ or $g_{\nu}$ is identically zero, so $g_{\mu\nu}'$ is identically zero. If $(\mu,\nu)=(i,1)$ then the claim holds for the coordinate $i1$ because:
\[g_{i}(0)=g_{i}(h/2N)=g_{i}'(0)=g_{i}'(h/2N)=0.\]

Since $g_{\mu\nu}'$ is identically zero if $(\mu,\nu)\neq (i,1)$, we have:
\[|g'-(L,0,\ldots, 0)|\leq |g_{1}'-L|+|g_{i}'|+|g_{i1}'|\leq L+|g_{1}'|+|g_{i}'|+|g_{i1}'|.\]
To bound this, we first recall the assumption $L<\varepsilon$. Next it is easy to see $|g_{1}'|, |g_{i}'|\leq \lambda |\theta'|$. Since \eqref{lambdasmall} states that $\lambda \leq C\varepsilon h$, we have:
\[\lambda |\theta'| \leq C \max \{L, \lambda/h\}\leq C\varepsilon.\]
Consequently $|g_{1}'|, |g_{i}'|\leq C\varepsilon$. For the final term we again use \eqref{lambdasmall} to obtain:
\begin{align*}
|g_{i1}'|&=\frac{1}{2}|g_{i}g_{1}'-g_{1}g_{i}'|\\
&\leq \frac{1}{2} (2\lambda \cdot C\varepsilon + (Lh+\lambda)\cdot C\varepsilon) \\
&\leq C\varepsilon.
\end{align*}
Hence 
\[|g'-(L,0,\ldots, 0)|\leq C\varepsilon.\]

To summarize, $g\colon [0,h/2N]\to \mathbb{G}$ is a $C^{1}$ horizontal curve with the following properties:
\begin{itemize}
\item $g(0)=\alpha(kh/N)$,
\item $(g_{1}(h/2N), \ldots, g_{r}(h/2N))=(\alpha_{1}(kh/N), \ldots, \alpha_{r}(kh/N))$,
\item $g_{\mu\nu}(h/2N)=\alpha_{\mu\nu}(kh/N)$ if $(\mu,\nu)\neq (i,1)$,
\item $g_{i1}(h/2N)=\tilde{q}_{i1}$,
\item $g'(0)=g'(h/2N)=(L,0,\ldots, 0)$,
\item $|g'-(L,0,\ldots, 0)|\leq C\varepsilon$.
\end{itemize}

Define a $C^{1}$ curve $\rho \colon [0,h/2N]\to \mathbb{G}$ by $\rho(t)=g(h/2N)+(2Lt,0,\ldots, 0)$. Since
\[(g_{1}(h/2N), \ldots, g_{r}(h/2N))=(kh/N, 0, \ldots, 0),\]
we see $\rho'(t)=(2L,0,\ldots, 0)=2LX_{1}(\rho(t))$, so $\rho$ is a horizontal curve. Clearly $\rho(0)=g(h/2N)$ and $\rho(h/2N)=g(h/2N)+(Lh/N,0,\ldots, 0)$, in particular 
\[(\rho_{1}(h/2N), \ldots, \rho_{r}(h/2N))=(L(k+1)h/N, 0, \ldots, 0).\]
Also notice $|\rho'-(L,0,\ldots, 0)|\leq L<\varepsilon$.

Define 
\[
 \alpha(t) = 
  \begin{cases} 
   g(t-kh/N) & \text{if } t\in [kh/N, (k+1/2)h/N], \\
   \rho(t-((k+1/2)h/N))       & \text{if } t\in [(k+1/2)h/N, (k+1)h/N].
  \end{cases}
\]
The curve $\alpha$ now satisfies the inductive hypotheses with $k$ replaced by $k+1$.
\end{case}

\begin{case}
Suppose the vertical coordinate to be fixed has the form $ij$ for $i>j>1$.

Fix $\lambda \in \mathbb{R}$ to be chosen later and let:
\[\zeta(t)=\lambda h \varepsilon (1-\cos(4\pi Nt/h)).\]
Suppose $\tilde{q}_{ij}\geq 0$; it will be clear that a similar argument works for $\tilde{q}_{ij}< 0$. Define $\tilde{g}\colon [0,h/N]\to \mathbb{R}^{r}$ by:
\[
\tilde{g}_{\mu}(t) = 
  \begin{cases} 
(Lkh/N)+Lt & \text{if } \mu=1, \\
\zeta(t) \cos(2\pi Nt/h)       & \text{if } \mu=i,\\
\zeta(t) \sin (2\pi Nt/h)	& \text{if } \mu=j,\\
0 & \text{if } \mu\neq 1,i,j.
  \end{cases}
\]
For the case $\tilde{q}_{ij}<0$, one should swap the formulae for $g_{i}$ and $g_{j}$. Notice
\[\tilde{g}(0)=(Lkh/N, 0, \ldots, 0)=(\alpha_{1}(kh/N), \ldots, \alpha_{r}(kh/N))\]
using the inductive hypothesis. Let $g\colon [0,h/N]\to \mathbb{G}$ be the horizontal lift of $\tilde{g}$ starting at $\alpha(kh/N)$.

Clearly:
\[\zeta(0)=\zeta'(0)=\zeta(h/N)=\zeta'(h/N)=0.\]
This implies that $g(0)=0$, $g'(0)=g'(h/N)=(L,0,\ldots, 0)$, and 
\[(g_{1}(h/N), \ldots, g_{r}(h/N))=(L(k+1)h/N,0,\ldots, 0).\]
Suppose $1\leq \nu<\mu\leq r$. If $\mu\neq 1,i,j$ or $\nu \neq1,i,j$ then $g_{\mu\nu}$ is identically zero, in particular $g_{\mu\nu}(h/N)=0$. To compute $g(h/N)$, it remains to calculate $g_{i1}(h/N), g_{j1}(h/N)$ and $g_{ij}(h/N)$.

First notice:
\begin{align*}
g_{i1}(h/N)&=\alpha_{i1}(kh/N)+\frac{1}{2}\int_{0}^{h/N} (g_{i}g_{1}'-g_{1}g_{i}')\\
&=\alpha_{i1}(kh/N)+\frac{1}{2}L\int_{0}^{h/N}g_{i}(t)-tg_{i}'(t)\dd t\\
&=\alpha_{i1}(kh/N)+L\int_{0}^{h/N}g_{i},
\end{align*}
using integration by parts and $g_{i}(h/N)=0$ to obtain the final equality. However, $\int_{0}^{h/N}g_{i}=0$ using orthogonality of the trigonometric system. Hence $g_{i1}(h/N)=\alpha_{i1}(kh/N)$. A similar argument gives the equality $g_{j1}(h/N)=\alpha_{j1}(kh/N)$.

It remains to calculate $g_{ij}(h/N)$. Since the coordinate $ij$ has not yet been fixed, $\alpha_{ij}(kh/N)=0$. Using elementary calculus, we obtain:
\begin{align*}
g_{ij}(h/N)&=\frac{1}{2}\int_{0}^{h/N} (g_{i}g_{j}'-g_{j}g_{i}')\\
&=\frac{2\pi N}{h}\int_{0}^{h/N} \zeta^2\\
&=\lambda^{2}h^{2}\varepsilon^{2} I,
\end{align*}
where
\[I=\int_{0}^{2\pi} (1-\cos 2s)^{2} \dd s,\]
which is a positive constant. Since we assumed that $\tilde{q}_{ij}\geq 0$, we may choose $\lambda\geq 0$ such that $\lambda^{2}h^{2}\varepsilon^{2} I=\tilde{q}_{ij}$. Using the inequality $|\tilde{q}_{ij}|\leq C\varepsilon^{2} h^{2}$ then implies:
\begin{equation}\label{lambdabounded2} \lambda \mbox{ is bounded by a constant}.\end{equation}
In the case $\tilde{q}_{ij}<0$, swapping the formulae for $g_{i}$ and $g_{j}$ gives $g_{ij}(h/N)=-\lambda^{2}h^{2}\varepsilon^{2} I$ and a similar argument applies.

Since $g_{\mu\nu}'$ is identically zero if $\mu \neq 1,i,j$ or $\nu \neq 1,i,j$, the following inequality holds:
\[|g'-(L,0,\ldots, 0)| \leq |g_{i}'| + |g_{j}'| + |g_{i1}'|+ |g_{j1}'| + |g_{ij}'|.\]
Notice, using \eqref{lambdabounded2}, $|\zeta|\leq 2\lambda h\varepsilon \leq Ch\varepsilon$ and $|\zeta'|\leq C\lambda \varepsilon \leq C\varepsilon$. Since
\[g_{i}'(t)=\zeta'(t)\cos(2\pi Nt/h) -\frac{2\pi N}{h}\zeta \sin(2\pi Nt/h),\]
we can estimate:
\[|g_{i}'|\leq |\zeta'| + \frac{2\pi N}{h}|\zeta|\leq C\varepsilon.\]
A similar argument bounds $|g_{j}'|$. Recall again that \eqref{Lbounded} states $L$ is bounded since $\gamma'|_{K}$ is continuous. Next, we have:
\begin{align*}
|g_{i1}'|=\frac{1}{2}|g_{i}g_{1}'-g_{1}g_{i}'| &\leq \frac{1}{2}(|\zeta|\cdot L + Lh\cdot C\varepsilon)\\
&\leq Ch\varepsilon\\
&\leq C\varepsilon.
\end{align*}
Similarly $|g_{j1}'|\leq C\varepsilon$. For the final term, notice
\begin{align*}
|g_{ij}'|=\frac{1}{2}|g_{i}g_{j}'-g_{j}g_{i}'| &\leq \frac{1}{2}(|\zeta|\cdot C\varepsilon + |\zeta|\cdot C\varepsilon)\\
&\leq Ch\varepsilon^2\\ 
&\leq C\varepsilon.
\end{align*}
Combining the preceeding estimates gives $|g'-(L,0,\ldots, 0)| \leq C\varepsilon$.

Define $\alpha(t)=g(t - kh/N)$ for $t\in [kh/N,(k+1)h/N]$. The curve $\alpha$ now satisfies the inductive hypotheses with $k$ replaced by $k+1$.
\end{case}

By repeatedly applying the inductive step, we obtain a $C^1$ horizontal curve $\alpha\colon [0,h]\to \mathbb{G}$ such that:
\begin{itemize}
\item $\alpha(0)=0$ and $\alpha(h)=\tilde{q}$, since $(\tilde{q}_{1}, \ldots, \tilde{q}_{r})=(Lh, 0, \ldots, 0)$ and every vertical coordinate was fixed at some stage in the construction of $\alpha$,
\item $\alpha'(0)=\alpha'(h)=(L,0,\ldots, 0)$,
\item $|\alpha'(t)-(L,0,\ldots, 0)|\leq C\varepsilon$ for all $t\in [0,h]$.
\end{itemize}

This concludes the proof of Claim \ref{interpolationsimplified}.
\end{proof}

\begin{proof}[Proof of Lemma \ref{interpolatingcurve}]
Concatenating the curves $\alpha$ and $\beta$ from Claim \ref{interpolationsimplified} and Claim \ref{velocityfix}, then reparameterizing so the domain is $[a,b]=[a,a+2h]$ instead of the interval $[0,2h]=[0,b-a]$, gives a curve $\psi$ with the properties desired in Lemma \ref{interpolatingcurve} with $\varphi$ in place of $\gamma$. Using Claim \ref{switch} then gives Lemma \ref{interpolatingcurve} for the original curve $\gamma$, as desired.
\end{proof}

\subsection{A smooth horizontal curve}

Recall $[m,M]\setminus K=\cup_{l\geq 1}(a_{l}, b_{l})$. Use Lemma \ref{interpolatingcurve} to choose, for each $l\geq 1$, $C^1$ horizontal curves $\psi_{l} \colon [a_{l},b_{l}] \to \mathbb{G}$ satisfying:
\begin{equation*}\psi_{l}(a_{l})=\gamma(a_{l}) \mbox{ and }\psi_{l}(b_{l})=\gamma(b_{l}),\end{equation*}
\begin{equation*}\psi_{l}'(a_{l})=\gamma'(a_{l}) \mbox{ and }\psi_{l}'(b_{l})=\gamma'(b_{l}),\end{equation*}
\begin{equation}\label{directions}|\psi_{l}'-\gamma'(a_{l})|\leq C\varepsilon_{l} \mbox{ on }[a_{l}, b_{l}].\end{equation}
We recall that $\varepsilon_{l}\downarrow 0$ as $l\to \infty$. Define a curve $\Gamma\colon [m,M]\to \mathbb{G}$ by:
\[ \Gamma(t) = \begin{cases} \gamma(t) & \mbox{if }t\in K,\\
\psi_{l}(t) & \mbox{if }t\in (a_{l},b_{l}) \mbox{ and } l\geq 1. \end{cases}\]

We now show that $\Gamma$ is the desired $C^{1}$ horizontal curve.

\begin{proposition}\label{smoothcurve}
The function $\Gamma\colon [m,M]\to \mathbb{G}$ defines a $C^{1}$ horizontal curve in $\mathbb{G}$ with derivative:
\[ \Gamma'(t) = \begin{cases} \gamma'(t) & \mbox{if }t\in K,\\
\psi_{l}'(t) & \mbox{if }t\in (a_{l},b_{l}) \mbox{ and } l\geq 1. \end{cases}\]
\end{proposition}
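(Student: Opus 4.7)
The strategy is to verify separately that $\Gamma$ is continuous, is differentiable at every point with the stated derivative, has a continuous derivative, and is horizontal. On each open gap $(a_l, b_l)$ all four properties are immediate because $\Gamma$ coincides there with the $C^{1}$ horizontal curve $\psi_l$, so the entire task reduces to gluing the pieces $\psi_l$ to $\gamma$ at points of $K$, using the boundary matching supplied by Lemma~\ref{interpolatingcurve}.

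For differentiability at a fixed $t_0 \in K$ with $\Gamma'(t_0) = \gamma'(t_0)$ the approach is as follows. For $t \in K$ the difference quotient reduces to $(\gamma(t) - \gamma(t_0))/(t - t_0)$, which converges to $\gamma'(t_0)$ by \eqref{l1}. For $t$ inside some gap $(a_l, b_l)$ with $t_0 \leq a_l < t < b_l$ (the opposite orientation being symmetric), the identity $\psi_l(a_l) = \gamma(a_l)$ gives the decomposition
\[
\Gamma(t) - \Gamma(t_0) - (t - t_0)\gamma'(t_0) = \bigl(\gamma(a_l) - \gamma(t_0) - (a_l - t_0)\gamma'(t_0)\bigr) + \int_{a_l}^{t} \bigl(\psi_l'(s) - \gamma'(t_0)\bigr) \dd s.
\]
The first bracket is $o(|a_l - t_0|) = o(|t - t_0|)$ by the ordinary differentiability of $\gamma$ at $t_0$. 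For the integrand I would use \eqref{directions} and the triangle inequality to write $|\psi_l'(s) - \gamma'(t_0)| \leq C\varepsilon_l + |\gamma'(a_l) - \gamma'(t_0)|$, so the integral is bounded by $(C\varepsilon_l + |\gamma'(a_l) - \gamma'(t_0)|)(t - a_l)$.

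The key observation making these estimates sufficient is that $\varepsilon_l \downarrow 0$, so for each $\eta > 0$ only finitely many indices $l$ satisfy $\varepsilon_l \geq \eta$; call these the \emph{exceptional} gaps. For $t$ inside a non-exceptional gap near $t_0$ one has $\varepsilon_l < \eta$ and $|a_l - t_0| < \delta(\eta)$, so \eqref{l2} makes the integral term $o(|t - t_0|)$. An exceptional gap can lie arbitrarily close to $t_0$ only when $t_0$ is one of its endpoints, and in that case differentiability at $t_0$ from that side is inherited directly from the $C^{1}$ regularity of $\psi_l$ together with the matching conditions $\psi_l(a_l) = \gamma(a_l)$ and $\psi_l'(a_l) = \gamma'(a_l) = \gamma'(t_0)$ (similarly at $b_l$). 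Continuity of $\Gamma'$ at $t_0 \in K$ then follows by an analogous but simpler trichotomy: for $t \in K$ apply \eqref{l2}; for $t$ in a non-exceptional nearby gap use $|\psi_l'(t) - \gamma'(t_0)| \leq C\varepsilon_l + |\gamma'(a_l) - \gamma'(t_0)|$ with the same finiteness argument; for the finitely many exceptional gaps incident to $t_0$ use continuity of $\psi_l'$ directly. Horizontality then comes for free: at every $t$ the vector $\Gamma'(t)$ is either $\gamma'(t)$, which is horizontal at $\gamma(t) = \Gamma(t)$ by \eqref{l1}, or $\psi_l'(t)$, which is horizontal at $\psi_l(t) = \Gamma(t)$ because $\psi_l$ is horizontal.

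The hard part is really the bookkeeping above: isolating the finitely many exceptional gaps (where the bound $C\varepsilon_l$ supplied by Lemma~\ref{interpolatingcurve} is not itself small) and absorbing them into the $C^{1}$-regularity at endpoints, while simultaneously checking that all remaining gaps close to $t_0$ contribute uniformly $o(|t - t_0|)$ errors. No further Carnot-group machinery is needed beyond what is already packaged into the estimates produced by Lemma~\ref{interpolatingcurve}; the argument is essentially the same one that shows a Euclidean Lusin-type construction produces a $C^{1}$ curve, adapted by noting that horizontality is preserved pointwise at every differentiation site.
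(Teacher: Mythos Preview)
Your proposal is correct and follows essentially the same approach as the paper's proof: both decompose $\Gamma(t)-\Gamma(t_0)-(t-t_0)\gamma'(t_0)$ via the intermediate point $a_l$, bound the integral term using \eqref{directions} together with the uniform continuity \eqref{l2} of $\gamma'|_K$, and exploit $\varepsilon_l\downarrow 0$ to show that only endpoint-incident gaps can fail the smallness estimate near $t_0$. Your ``exceptional gaps'' language is a clean repackaging of the paper's observation that for $t_0\neq a_l$ one can force $l$ to be large (hence $\varepsilon_l$ small) by taking $\delta$ small, but the logic is identical.
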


\begin{proof}
Clearly $\Gamma$ is $C^{1}$ and $\Gamma'=\psi_{l}'$ on each interval $(a_{l},b_{l})$, $l\geq 1$. 

We next check that $\Gamma'(t)$ exists and is equal to $\gamma'(t)$ for $t\in K$. Fix $t\in K$. We first check differentiability at $t$ from the right. If $t=a_{l}$ for some $l$ then differentiability on the right of $\Gamma$ at $t$ follows from that of $\psi_{l}$ and the equalities $\psi_{l}(a_{l})=\gamma(a_{l})$ and $\psi_{l}'(a_{l})=\gamma'(a_{l})$. Suppose $t\neq a_{l}$ for any $l$ and let $\delta>0$. If $t+\delta\in K$ then
\[|\Gamma(t+\delta)-\Gamma(t)-\delta\gamma'(t)|=|\gamma(t+\delta)-\gamma(t)-\delta \gamma'(t)|,\]
which is small relative to $\delta$, for $\delta$ sufficiently small, by differentiability of $\gamma$ at $t$. Suppose $t+\delta\notin K$. Then $t< a_{l}<t+\delta<b_{l}$ for some $l\geq 1$; by choosing $\delta$ small we can ensure that $l$ is large and hence $\varepsilon_{l}$ is small. In this case we estimate as follows:
\begin{align*}
|\Gamma(t+\delta)-\Gamma(t)-\delta\gamma'(t)|&\leq |\Gamma(t+\delta)-\Gamma(a_{l})-(t+\delta-a_{l}) \gamma'(t)|\\
&\qquad + |\Gamma(a_{l})-\Gamma(t)-(a_{l}-t)\gamma'(t)|\\
&\leq |\psi_{l}(t+\delta)-\psi_{l}(a_{l})-(t+\delta-a_{l})\gamma'(a_{l})|\\
&\qquad + |t+\delta-a_{l}||\gamma'(a_{l})-\gamma'(t)|\\
&\qquad \qquad + |\gamma(a_{l})-\gamma(t)-(a_{l}-t)\gamma'(t)|.
\end{align*}
The second term is small, relative to $\delta$, by using continuity of $\gamma'|_{K}$ and the simple estimate $|t+\delta-a_{l}|\leq t+\delta-t=\delta$. The third term is small, relative to $\delta$, by differentiability of $\gamma$ at $t$. For the first term we estimate further, using \eqref{directions},
\begin{align*}
|\psi_{l}(t+\delta)-\psi_{l}(a_{l})-(t+\delta-a_{l})\gamma'(a_{l})| &= \left| \int_{a_{l}}^{t+\delta} (\psi_{l}'(s)-\gamma'(a_{l}))\dd s\right|\\
&\leq \int_{a_{l}}^{t+\delta} C\varepsilon_{l} \dd s\\
&\leq C \varepsilon_{l} \delta.
\end{align*}
As indicated above, by choosing $\delta>0$ sufficiently small we can ensure that $\varepsilon_{l}$ is also small. This gives the estimate of the final term. Hence $\Gamma$ is differentiable at $t$ from the right with the required derivative $\gamma'(t)$. A similar argument, with $t+\delta$ replaced by $t-\delta$, gives differentiability of $\Gamma$ from the left. Hence $\Gamma$ is differentiable at $t\in K$ with $\Gamma'(t)=\gamma'(t)$, as claimed.

Clearly $\Gamma$ is everywhere horizontal, since $\gamma$ is horizontal in $K$ and each map $\psi_{l}$ is horizontal in $(a_{l},b_{l})$. We claim that $\Gamma$ is also $C^{1}$. Continuity of $\Gamma'$ in intervals $(a_{l},b_{l})$ is clear because each map $\psi_{l}$ is $C^{1}$. To conclude the proof, we must check continuity of $\Gamma'$ at points of $K$. Fix $t\in K$. We first show that $\Gamma'$ is continuous at $t$ from the right. We showed above that $\Gamma'(t)=\gamma'(t)$. If $t=a_{l}$ for some $l$ then continuity of $\Gamma'$ at $t$ follows from continuity of $\psi_{l}'$ and the equality $\psi_{l}'(a_{l})=\gamma'(a_{l})$. Suppose $t\neq a_{l}$ for any $l$ and let $\delta>0$. If $t+\delta\in K$ then $\Gamma'(t+\delta)=\gamma'(t+\delta)$, which is close to $\gamma'(t)$ for sufficiently small $\delta$ using continuity of $\gamma'|_{K}$. Suppose now that $t+\delta \notin K$. Then we can find $l\geq 1$ such that $t < a_{l}<t+\delta<b_{l}$. As before, if $\delta$ is sufficiently small then we can ensure that $\varepsilon_{l}$ is small. In this case we have:
\[|\Gamma'(t+\delta)-\Gamma'(t)|\leq |\psi_{l}'(t+\delta)-\gamma'(a_{l})|+|\gamma'(a_{l})-\gamma'(t)|.\]
This is small, for sufficiently small $\delta>0$, by \eqref{directions} for the first term and continuity of $\gamma'|_{K}$ for the second. This shows that $\Gamma'$ is continuous on the right. A similar argument gives continuity of $\Gamma'$ at $t$ on the left. This concludes the proof.
\end{proof}

Using the definition of $\Gamma$ and Proposition \ref{smoothcurve} gives:
\begin{align*}
\mathcal{L}^{1}\{t\in [m,M]:\Gamma(t)\neq \gamma(t)\mbox{ or }\Gamma'(t)\neq \gamma'(t)\} &\leq \mathcal{L}^{1}([m,M]\setminus K)\\
&<\varepsilon.
\end{align*}
Proposition \ref{smoothcurve} also states that $\Gamma$ is a $C^{1}$ horizontal curve on $[m,M]$. Extending $\Gamma$ arbitrarily to a $C^{1}$ horizontal curve on $[0,1]$ proves Theorem \ref{freelusin}. 

\section{Lusin approximation in general Carnot groups of step 2}\label{generalcase}

In this section we prove Theorem \ref{lusinstep2}, which extends Lusin approximation for horizontal curves to general Carnot groups of step 2. We first show how the approximation is preserved by suitable maps between Carnot groups.

\begin{theorem}\label{imagespreserve}
Let $\mathbb{G}$ and $\mathbb{H}$ be Carnot groups whose Lie algebras have horizontal layer $V$ and $W$, respectively. Suppose $\mathbb{G}$ admits Lusin approximation for horizontal curves and there is a Lie group homomorphism $F\colon \mathbb{G}\to \mathbb{H}$ such that $dF(V)=W$. Then $\mathbb{H}$ admits Lusin approximation for horizontal curves.
\end{theorem}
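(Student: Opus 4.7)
The approach is the natural functorial one: lift $\tilde{\gamma}$ to a horizontal curve in $\mathbb{G}$, apply the known Lusin approximation there, and push the resulting $C^1$ horizontal approximant forward by $F$. Fix $\tilde{\gamma}\colon [0,1]\to\mathbb{H}$ absolutely continuous and horizontal, and $\varepsilon>0$.

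First I would construct an absolutely continuous horizontal curve $\gamma\colon [0,1]\to\mathbb{G}$ with $F\circ\gamma=\tilde{\gamma}$. Since $dF|_{V}\colon V\to W$ is surjective by hypothesis, I pick a linear right inverse $s\colon W\to V$. Writing $\tilde{\gamma}'(t)=\sum_j u_j(t)\, Y_j(\tilde{\gamma}(t))$ for a basis $Y_1,\ldots,Y_q$ of $W$ and $u_j\in L^1[0,1]$, I set $X_j:=s(Y_j)\in V$ and let $\gamma$ be the Carath\'eodory solution of
\[ \gamma'(t)=\sum_j u_j(t)\, X_j(\gamma(t)),\qquad \gamma(0)\in F^{-1}(\tilde{\gamma}(0)). \]
The key identity, valid because $F$ is a Lie group homomorphism, is that left-invariant vector fields push forward to left-invariant ones, so for every $g\in\mathbb{G}$,
\[ dF_{g}\bigl(X_j(g)\bigr)=dL_{F(g)}\bigl(dF_e(X_j(e))\bigr)=dL_{F(g)}\bigl(Y_j(e)\bigr)=Y_j(F(g)). \]
Consequently $(F\circ\gamma)'(t)=\sum_j u_j(t)\, Y_j(F(\gamma(t)))$, and matching initial values shows $F\circ\gamma$ and $\tilde{\gamma}$ solve the same Carath\'eodory ODE, hence $F\circ\gamma=\tilde{\gamma}$.

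Next I would apply the hypothesis to $\gamma$ to obtain a $C^1$ horizontal curve $\Gamma\colon [0,1]\to\mathbb{G}$ with
\[ \mathcal{L}^1\{t\in[0,1]:\Gamma(t)\neq\gamma(t)\text{ or }\Gamma'(t)\neq\gamma'(t)\}<\varepsilon. \]
Define $\tilde{\Gamma}:=F\circ\Gamma$. Since $F$ is smooth and $\Gamma$ is $C^1$, $\tilde{\Gamma}$ is $C^1$; since $dF(V)\subset W$, $\tilde{\Gamma}$ is horizontal. Finally, at any $t$ where $\Gamma(t)=\gamma(t)$ and $\Gamma'(t)=\gamma'(t)$ we have $\tilde{\Gamma}(t)=F(\gamma(t))=\tilde{\gamma}(t)$ and $\tilde{\Gamma}'(t)=dF(\gamma'(t))=\tilde{\gamma}'(t)$, so the exceptional set for $(\tilde{\Gamma},\tilde{\gamma})$ is contained in that for $(\Gamma,\gamma)$ and has measure less than $\varepsilon$.

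The only technical point requiring care is the well-posedness of the lift with $L^1$ inputs; this is standard in Carnot groups, where in exponential coordinates the horizontal vector fields are polynomial and nilpotency gives global existence on $[0,1]$. One also uses that $F$ is surjective, so $F^{-1}(\tilde{\gamma}(0))$ is nonempty: indeed, the image of $dF$ contains $W$ and all iterated brackets of $W$, which generate $\mathfrak{h}$ because $\mathbb{H}$ is Carnot, so $F$ is a submersion whose image is an open-and-closed subgroup of the connected group $\mathbb{H}$. Beyond these points, the argument is pure naturality of $F$ and I do not expect any serious obstacle.
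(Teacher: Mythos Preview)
Your proposal is correct and follows essentially the same route as the paper: lift the horizontal curve from $\mathbb{H}$ to $\mathbb{G}$ by solving the Carath\'eodory ODE with the pulled-back controls, apply the Lusin approximation in $\mathbb{G}$, and push forward by $F$. The only cosmetic difference is that the paper first translates so that $\tilde{\gamma}(0)=0$ (making the choice of initial point in $\mathbb{G}$ trivial), whereas you instead argue directly that $F$ is surjective; both work.
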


\begin{proof}
Let $\gamma \colon [0,1]\to \mathbb{H}$ be an absolutely continuous horizontal curve in $\mathbb{H}$. Fix $\varepsilon > 0$. Since group translations preserve $C^1$ horizontal curves, we can assume $\gamma(0)=0$ to find a $C^1$ horizontal approximation for $\gamma$. Fix a basis $Y_{1}, \ldots, Y_{r}$ of $W$. Since $\gamma$ is a horizontal curve in $\mathbb{H}$, there exist integrable functions $a_{i}\colon [0,1]\to \mathbb{R}$, $1\leq i\leq r$, such that for almost every $t\in [0,1]$:
\[\gamma'(t)=\sum_{i=1}^{r} a_{i}(t)Y_{i}(\gamma(t)).\]
Since $dF(V)=W$, we may find $X_{1}, \ldots, X_{r}\in V$ such that $dF(X_{i})=Y_{i}$ for $1\leq i\leq r$. It is well known that there exists an absolutely continuous curve $\alpha \colon [0,1]\to \mathbb{G}$ such that $\alpha(0)=0$ and for almost every $t\in [0,1]$:
\[\alpha'(t)=\sum_{i=1}^{r} a_{i}(t)X_{i}(\alpha(t)).\]
Since $X_{1}, \ldots, X_{r}\in V$, such a curve $\alpha$ is horizontal. Notice
\[F(\alpha(0))=F(0)=0=\gamma(0).\]
The chain rule implies that for almost every $t$:
\begin{align*}
(F\circ \alpha)'(t)&=dF_{\alpha(t)}(\alpha'(t))\\
&=dF_{\alpha(t)} \left(\sum_{i=1}^{r} a_{i}(t)X_{i}(\alpha(t)) \right)\\
&=\sum_{i=1}^{r}a_{i}(t)Y_{i}(F(\alpha(t))).
\end{align*}
Since $F\circ \alpha$ and $\gamma$ follow the same flow starting at 0, we deduce $F\circ \alpha=\gamma$.

Since Lusin approximation for horizontal curves holds in $\mathbb{G}$, we may choose a $C^1$ horizontal curve $\beta\colon [0,1]\to \mathbb{G}$ such that:
\[\mathcal{L}^{1} \{t\in [0,1] \colon \beta(t)\neq \alpha(t) \mbox{ or }\beta'(t)\neq \alpha'(t)\}<\varepsilon.\]
Define $\Gamma\colon [0,1] \to \mathbb{H}$ by $\Gamma = F\circ \beta$. Clearly $\Gamma$ is $C^1$ and horizontal, since $F$ is smooth and preserves the horizontal layer. To conclude the proof, we claim that if $t$ satisfies $\beta(t)=\alpha(t)$ and $\beta'(t)=\alpha'(t)$, then $\Gamma(t)=\gamma(t)$ and $\Gamma'(t)=\gamma'(t)$. Fix such a $t$. Then:
\[\Gamma(t)=F(\beta(t))=F(\alpha(t))=\gamma(t)\]
and
\begin{align*}
\Gamma'(t)=(F\circ \beta)'(t)&=dF_{\beta(t)}(\beta'(t))\\
&=dF_{\alpha(t)}(\alpha'(t))\\
&=(F\circ \alpha)'(t)\\
&=\gamma'(t).
\end{align*}
\end{proof}

We next use the definition of free Lie groups in terms of free-nilpotent Lie algebras (Definition \ref{freecarnotgroup}) to show that any Carnot group is an image of a free Carnot group of the same step by a map of the type in Theorem \ref{imagespreserve}.

\begin{theorem}\label{goodmapsexist}
Suppose $\mathbb{H}$ is a Carnot group of step $s$ whose Lie algebra has horizontal layer $W$. Then there is a free Carnot group $\mathbb{G}$ of step $s$, whose Lie algebra has horizontal layer $V$, and a Lie group homomorphism $F\colon \mathbb{G}\to \mathbb{H}$ such that $dF(V)=W$.
\end{theorem}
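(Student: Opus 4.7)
The plan is to invoke the universal property of the free-nilpotent Lie algebra from Definition \ref{freeliealgebra}(3), together with the standard lifting of Lie algebra homomorphisms to Lie group homomorphisms between simply connected Lie groups (mentioned just before Definition \ref{freecarnotgroup}).

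First I would set up the candidate free group. Let $r = \dim W$ and fix a basis $y_{1}, \ldots, y_{r}$ of $W$. Consider the free-nilpotent Lie algebra $\mathcal{F}_{r,s}$ with generators $x_{1}, \ldots, x_{r}$, and let $\mathbb{G}$ be the simply connected Lie group whose Lie algebra is $\mathcal{F}_{r,s}$; by Definition \ref{freecarnotgroup} this $\mathbb{G}$ is a free Carnot group of step $s$ whose horizontal layer $V$ is the linear span of the generators $x_{1}, \ldots, x_{r}$.

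Next I would verify that the Lie algebra $\mathfrak{h}$ of $\mathbb{H}$ meets the hypothesis of Definition \ref{freeliealgebra}(3), namely that it is nilpotent of step $s$. This is immediate from the Carnot stratification $\mathfrak{h} = W_{1} \oplus \cdots \oplus W_{s}$ in Definition \ref{Carnot}: induction on the length using $W_{i} = [W_{1}, W_{i-1}]$ and $[W_{1}, W_{s}] = 0$ shows that every nested bracket of length $s+1$ is zero. Therefore the universal property applies to the set map $\Phi \colon \{x_{1}, \ldots, x_{r}\} \to \mathfrak{h}$ defined by $\Phi(x_{i}) = y_{i}$, producing a Lie algebra homomorphism $\tilde{\Phi} \colon \mathcal{F}_{r,s} \to \mathfrak{h}$ extending $\Phi$.

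Finally, since $\mathbb{G}$ is simply connected, $\tilde{\Phi}$ integrates to a Lie group homomorphism $F \colon \mathbb{G} \to \mathbb{H}$ with $dF = \tilde{\Phi}$. Because $V = \mathrm{Span}\{x_{1}, \ldots, x_{r}\}$, we obtain
\[
dF(V) = \tilde{\Phi}(V) = \mathrm{Span}\{y_{1}, \ldots, y_{r}\} = W,
\]
as required. There is no real obstacle here; the whole argument is a direct application of the defining universal property of $\mathcal{F}_{r,s}$ together with the Lie-to-group integration of morphisms, and the only thing one must not overlook is the verification that $\mathfrak{h}$ is genuinely nilpotent of step $s$ so that the universal property is actually available.
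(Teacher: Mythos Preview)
Your proof is correct and follows essentially the same route as the paper: pick a basis of $W$, invoke the universal property of $\mathcal{F}_{r,s}$ to build a Lie algebra homomorphism sending generators to basis vectors, and then lift to a Lie group homomorphism using simple connectedness. The only difference is that you explicitly check that $\mathfrak{h}$ is nilpotent of step $s$, which the paper leaves implicit.
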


\begin{proof}
Let $Y_{1}, \ldots, Y_{r}$ be a basis of $W$. Let $\mathbb{G}$ be a free Carnot group of step $s$ whose Lie algebra is generated by a basis $X_{1}, \ldots, X_{r}$ of its horizontal layer $V$. Define $\Phi\colon \{X_{1}, \ldots, X_{r}\}\to \{Y_{1}, \ldots, Y_{r}\}$ by $\Phi(X_{i})=Y_{i}$ for $1\leq i\leq r$. Since the Lie algebra of $\mathbb{G}$ is free, $\Phi$ extends to a homomorphism $\tilde{\Phi}$ from the Lie algebra of $\mathbb{G}$ to the Lie algebra of $\mathbb{H}$. In particular, $\tilde{\Phi}(X_{i})=Y_{i}$ for $1\leq i\leq r$, which implies $\tilde{\Phi}(V)=W$. Since Carnot groups are simply connected Lie groups we can lift $\tilde{\Phi}$ to a Lie group homomorphism $F\colon \mathbb{G}\to \mathbb{H}$ such that $dF=\tilde{\Phi}$, in particular $dF(V)=W$.
\end{proof}

By Theorem \ref{freelusin} we know that free Carnot groups of step 2 admit Lusin approximation for horizontal curves. Theorem \ref{goodmapsexist} and Theorem \ref{imagespreserve} show that any Carnot group of step 2 is an image of a free Carnot group of step 2, under a map that preserves Lusin approximation for horizontal curves. We conclude that $C^1$ approximation for horizontal curves holds in any Carnot group of step 2, proving Theorem \ref{lusinstep2}.

\end{document}